\theoremstyle{plain}
\newtheorem{thm}{Theorem}[section]
\newtheorem{exp-thm}[thm]{Expected Theorem}
\newtheorem{exp-lem}[thm]{Expected Lemma}
\newtheorem{exp-cor}[thm]{Expected Corollary}
\newtheorem{lem}[thm]{Lemma}
\newtheorem{prop}[thm]{Proposition}    
\newtheorem{cor}[thm]{Corollary}
\newtheorem{defin}[thm]{Definition}
\theoremstyle{remark}                 
\newtheorem{remark}[thm]{Remark}
\newcommand{\R}{\mathbb{R}}
\newcommand{\N}{\mathbb{N}}
\newcommand{\G}{\mathbb{G}}
\newcommand{\scal}[2]{\langle {#1} , {#2}\rangle}
\newcommand{\Hnabla}{\nabla_{\G}}
\newcommand{\divg}{\mathrm{div}_{\G}}
\newcommand{\bq}{\mathbf{q}}
\newcommand{\bxi}{\boldsymbol{\xi}}
\newcommand{\e}{\varepsilon}
\renewcommand{\leq}{\leqslant}
\renewcommand{\le}{\leqslant}
\renewcommand{\geq}{\geqslant}
\renewcommand{\ge}{\geqslant}
\def\cleardoublepage{\clearpage\if@twoside \ifodd\c@page\else
	\hbox{}
	\thispagestyle{empty}
	\newpage
	\if@twocolumn\hbox{}\newpage\fi\fi\fi}
\numberwithin{equation}{section}
\begin{document}	
	\title{Regularity for almost minimizers of a one-phase Bernoulli-type functional in Carnot Groups of step two}
	
	\author{Fausto Ferrari}
	\address{Fausto Ferrari: Dipartimento di Matematica Universit\`a di Bologna Piazza di Porta S.Donato 5  40126, Bologna-Italy}
	\email{fausto.ferrari@unibo.it }
	
	\author{Nicol\`o Forcillo}
	\address{Nicol\`o Forcillo: Department of Mathematics, Michigan State University,
619 Red Cedar Road, East Lansing, MI 48824, USA}
	\email{forcill1@msu.edu}

	\author{Enzo Maria Merlino}
	\address{Enzo Maria Merlino: Dipartimento di Matematica Universit\`a di Bologna Piazza di Porta S.Donato 5 40126, Bologna-Italy}
	\email{enzomaria.merlino2@unibo.it }
	
	\thanks{The authors are members of Gruppo Nazionale per l'Analisi Matematica, la Probabilit\'a e le loro Applicazioni (GNAMPA) of the Istituto Nazionale di Alta Matematica (INdAM): F.F. has been partially supported by the GNAMPA research project 2023 "Equazioni completamente non lineari locali e non locali", N.F. has been supported by the GNAMPA research project 2023 "Problemi variazionali/nonvariazionali: interazione tra metodi integrali e principi del massimo" and E.M.M. has partially supported by the GNAMPA research project 2023 "Equazioni nonlocali di tipo misto e geometrico". F.F. and E.M.M. have been supported by the PRIN research project 2022 7HX33Z - CUP J53D23003610006, "Pattern formation in nonlinear phenomena".  The authors wish to thank  Bruno Franchi and Francesca Corni for some useful discussions.}
        \subjclass[2020]{35R35, 35R03.}
        \keywords {Almost minimizers, Carnot groups, One-phase free boundary problem.}	
        \date{\today}
	
	\begin{abstract}
		We prove that nonnegative almost minimizers of the horizontal Bernoulli-type functional
		$$		J(u,\Omega):=\int_{\Omega}\Big(|\Hnabla u(x)|^2+\chi_{\{u>0\}}(x)\Big)\,dx$$
		are Lipschitz continuous with respect to the Carnot-Carath\'eodory distance.
	\end{abstract}
        \maketitle
	\tableofcontents
	
	\section{Introduction and main result}

	In this paper, we study the regularity of almost minimizers of Bernoulli-type energy functionals in Carnot groups. 
	
	In the Euclidean setting, the regularity of minimizers for the classical one-phase Bernoulli energy functional
\begin{equation}\label{eq-intro}		
\mathscr{F}(u,\Omega):=\int_{\Omega}\Big(|\nabla u(x)|^2+\chi_{\{u>0\}}(x)\Big)\,dx
\end{equation}
    has been deeply studied since the pioneering work of Alt and Caffarelli, see \cite{AC}. We refer the reader to \cites{Vel} and the references therein for a comprehensive description of the subject.

	More recently, the regularity of almost minimizers associated with Bernoulli-type functionals was investigated as well. We recall that the notion of \emph{almost minimality} is a relaxed notion of minimality that arises from variational problems with constraints, such as problems with prescribed volume or curvature and obstacle problems. Almgren introduced the notion of almost minimizers in geometric measure theory \cite{Alm76}. In the nonparametric setting, this notion was introduced by Anzellotti in \cite{Anz83}. Roughly speaking, almost minimizers may be considered as local perturbations of minimizers, which have smaller contribution at small scales. Hence, one may expect that similar regularity results for minimizers hold also for almost minimizers. Nevertheless, the condition satisfied by almost minimizers does not allow to obtain an Euler-Lagrange equation as in the case of minimizers. 
    
    Concerning Bernoulli-type functionals, in \cite{DT}, David and Toro proved that almost minimizers of \eqref{eq-intro} are locally Lipschitz continuous, which is the optimal regularity for minimizers. Results about the free boundary regularity can be found in \cite{DET}. These works employ variational techniques involving tools coming from potential theory and geometric measure theory. 
    We refer to \cite{vega} for a comprehensive overview on the topic.
    More recently, De Silva and Savin, in \cite{DS}, introduced a more direct approach to obtain Lipschitz regularity of almost minimizers of \eqref{eq-intro} based on a dichotomy argument.
Since their techniques involve, mainly, only metric properties, we point out that they may be extended to different more general frameworks. In this direction, see, for instance, \cite{DS-thin} and \cite{DFFV}, dedicated to the \emph{thin} Bernoulli functional and a one-phase problem driven by the $p$-Laplace operator respectively.

	Free boundary problems arising from the minimization of Bernoulli-type functionals may be formulated even in the noncommutative setting of Carnot groups. See, for instance, \cite{DF} for the case of the Heisenberg group. Carnot groups are an intensively-studied noncommutative structure playing a relevant role in many applications. For instance, they represent the tangent model of a general sub-Riemannian manifold and are the natural framework to describe some systems with nonholonomic constraints. On Carnot groups, there exists a comprehensive literature involving geometric measure theory \cites{Mon,FSSC01,FSSC03}, subelliptic partial differential equations \cites{BLU, Fo, FS}, and differential geometry \cites{LD15,LD16,CDPT}. Nevertheless, very little is known on Bernoulli free boundary problems in this setting. For the one-phase case, in \cite{FV}, only in the Heisenberg group, the authors proved the  existence of minimizers, which turn out to be intrinsic harmonic away from the free boundary and present linear growth close to the free boundary. Besides, they proved some density estimates. Furthermore, we recall that, concerning the two-phase case, classical tools seem to fail in this setting. For instance, as recently shown in \cites{FF,FG}, an intrinsic Alt-Caffarelli-Friedman monotonicity formula, written as the natural counterpart to the classical Euclidean one, does not hold. In addition, the viscosity approach occurs some difficulties due to the characteristic points. For instance, this is the case of the comparison principle, see \cite{DF}*{Section 8}. Moreover, we point out that, for these functionals, the regularity results do not follow from the classical theory in the calculus of variations, since the integrand of the functional is discontinuous (in the classical case, the integrand is required to be, for example, Lipschitz continuous, see e.g., \cite{G}*{condition (8.48)} in the Euclidean context or, for example, under stronger regularity assumption in \cites{CG-jems,MZ}, in Carnot groups).

 The setting in which we are working on may be summarized as follows. Let $\G$ be a Carnot group of step two. Assume that $\Omega\subset\G$ is a measurable domain. We define the following energy functional
	\begin{equation}\label{def-J}
		J(u,\Omega):=\int_{\Omega}\Big(|\Hnabla u(x)|^2+\chi_{\{u>0\}}(x)\Big)\,dx,
	\end{equation}
where $u$ belongs to the \emph{horizontal Sobolev space} $HW^{1,2}(\Omega)$, $u\geq0$ a.e. in $\Omega$, $\Hnabla u$ is the so-called \emph{horizontal gradient} of $u$ and $dx$ denotes the Haar measure of the group. We refer to Section \ref{sec:preliminary} for the main notation and definitions.

In the present paper, we focus on some regularity properties of almost minimizers for $J$.

\begin{defin}\label{def-AM} Let $\kappa\ge0$ and $\beta>0$, and $\Omega\subseteq\G$ be an open set. We say that $u\in HW^{1,2}(\Omega)$ is an almost minimizer of $J$ in $\Omega$, with constant $\kappa$ and exponent $\beta$, if $u\geq 0$ $\mathcal{L}^n$-almost everywhere in $\Omega$ and
		\begin{equation}\label{inequality-of-J-p-u-alm-min}
		J(u,B_\varrho(x))\leq(1+\kappa \varrho^\beta)J(v,B_\varrho(x)),
		\end{equation}
		for every metric ball $B_\varrho(x)$, of radius $\varrho\leq0$ and center $x\in\G$, such that $\overline{B_\varrho(x)}\subset \Omega$ and for every $v\in HW^{1,2}(B_\varrho(x))$ such that $u-v\in HW^{1,2}_0(B_\varrho(x))$.
	\end{defin}

    Our main result is the following one.
	\begin{thm}\label{theor-Lipsch-contin-alm-minim}
		Let $u$ be an almost minimizer for  $J$ in $B_1$ with constant $\kappa$ and exponent $\beta$. Then,
		\[\left\|\Hnabla u\right\|_{L^{\infty}(B_{1/2})}\le C\Big(\left\|\Hnabla u\right\|_{L^2(B_1)}+1\Big)\]
        where  $C>0$ is a constant only depending on the homogeneous dimension $Q$, $\kappa,$ and  $\beta$.
		
		In addition, $u$ is uniformly Lipschitz continuous in a neighborhood of  $\left\{u=0\right\}$, namely if  $u(0)=0,$ then
		\[\left|\Hnabla u\right|\le C\quad \mbox{in }B_{r_0},\]
		for some  $C>0$, only depending on  $Q$, $\kappa$ and  $\beta$, and  $r_0\in(0,1)$, depending on  $Q$, $\kappa$, $\beta$ and  $\|\Hnabla u\|_{L^2(B_1)}$.
	\end{thm}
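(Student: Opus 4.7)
The plan is to follow the nonvariational scheme of David--Savin \cite{DS}, adapted to the subelliptic setting of Carnot groups of step two, along the lines of what was done in the nonlinear Euclidean framework in \cite{DFFV}. The first step is to introduce, for each admissible intrinsic ball $B_\varrho(x_0)\subset\Omega$, the \emph{horizontal harmonic replacement} $h$, namely the solution of $\Delta_\G h=0$ in $B_\varrho(x_0)$ with $h-u\in HW^{1,2}_0(B_\varrho(x_0))$. By the (sub-Laplacian) maximum principle $h\geq 0$, so $h$ is an admissible competitor. Plugging it into the almost-minimality inequality \eqref{inequality-of-J-p-u-alm-min} and exploiting the orthogonality $\int\Hnabla h\cdot\Hnabla(u-h)=0$, one deduces the comparison estimate
\[
\int_{B_\varrho(x_0)}|\Hnabla(u-h)|^2\,dx\leq C\Big(\kappa\varrho^\beta\int_{B_\varrho(x_0)}|\Hnabla u|^2\,dx+\varrho^Q\Big),
\]
which quantifies how close $u$ is to being sub-Laplacian harmonic at scale $\varrho$.

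Next, I would invoke the interior regularity available for solutions of $\Delta_\G h=0$ in Carnot groups of step two (Caccioppoli inequality and Moser-type $L^\infty$ bounds from the $L^2$ norm, as recalled in Section \ref{sec:preliminary}) to obtain
\[
\|\Hnabla h\|_{L^\infty(B_{\varrho/2}(x_0))}^2\leq \frac{C}{\varrho^Q}\int_{B_\varrho(x_0)}|\Hnabla h|^2\,dx.
\]
Combining this pointwise control with the comparison estimate and the triangle inequality, the normalized energy
\[
\phi(r):=r^{-Q}\int_{B_r(x_0)}|\Hnabla u|^2\,dx
\]
satisfies, for a suitable dyadic parameter $\tau\in(0,1)$, an inequality of the form $\phi(\tau r)\leq (1+C\kappa r^\beta)\phi(r)+C$. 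Iterating in the spirit of Campanato, the product $\prod_k (1+C\kappa(\tau^k r)^\beta)$ is summable in $\log$ and so $\phi$ stays uniformly bounded as $r\to 0$, giving the pointwise $L^\infty$ bound on $\Hnabla u$ in $B_{1/2}$ asserted in the first part of the theorem.

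To prove the uniform bound near $\{u=0\}$, the additional input is a Poincar\'e-type inequality in the subelliptic setting: at a zero point $u(0)=0$, nonnegativity forces $\|u\|_{L^2(B_r)}$ to grow no faster than $r$ times $\|\Hnabla u\|_{L^2(B_r)}$, which lets the cavity contribution $|\{u>0\}\cap B_r|$ be reabsorbed into the Dirichlet term of the harmonic replacement. To remove the dependence on $\|\Hnabla u\|_{L^2(B_1)}$, I would then run a compactness and contradiction argument familiar from \cite{DS,DFFV}: suppose a sequence $u_k$ of almost minimizers with $u_k(0)=0$ violates the estimate on balls $B_{r_k}$; rescaling appropriately, the $\kappa r^\beta$ error goes to zero, and the rescaled functions converge (via the Caccioppoli estimate and the compact embedding of $HW^{1,2}$) to a nonnegative, nontrivial sub-Laplacian harmonic function in the whole group vanishing at the origin, contradicting classical interior gradient estimates.

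The main obstacles I foresee are two-fold. First, the noncommutativity of the horizontal vector fields prevents differentiating the equation $\Delta_\G h=0$ to get estimates on $\Hnabla h$ in the same direct way as in the Euclidean setting; one must rely on Caccioppoli-type arguments, on Moser's $L^\infty$-to-$L^2$ bounds valid in the step-two case, and on left-translation invariance of $J$ to localize the estimates. Second, the geometry of intrinsic balls is considerably more rigid than the Euclidean one, with possible characteristic points on $\partial B_\varrho$; particular care is therefore needed both in the trace theory used to guarantee that $v\in HW^{1,2}(B_\varrho)$ with $u-v\in HW^{1,2}_0(B_\varrho)$ is a legitimate competitor and in the compactness step, where one must ensure that blow-up limits inherit the same class of competitors.
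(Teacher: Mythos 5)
Your first two ingredients are the correct ones and match the paper: the $\G$-harmonic replacement $h$ together with the almost-minimality inequality and the orthogonality $\int\langle\Hnabla(u-h),\Hnabla h\rangle=0$ give a comparison estimate of exactly the form you write, and the interior $L^\infty$/oscillation estimates for $\Hnabla h$ from \cite{CM22} are used in the paper in the same role. The gap is in the iteration step. Your claimed inequality $\phi(\tau r)\leq(1+C\kappa r^\beta)\phi(r)+C$ does not follow from the comparison estimate (averaging $\int_{B_\rho}|\Hnabla(u-h)|^2$ over the smaller ball $B_{\tau\rho}$ produces a factor $\tau^{-Q}$, and the Moser bound $\sup_{B_{\rho/2}}|\Hnabla h|^2\leq C\fint_{B_\rho}|\Hnabla h|^2$ carries a constant $C>1$ on the leading term, so at best one obtains $\phi(\tau\rho)\leq C\phi(\rho)+C\tau^{-Q}$, which is useless). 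Moreover, even if your inequality were correct, it would only give growth linear in the number of iterations because of the additive constant $+C$ at every scale, not boundedness; a summable product $\prod_k(1+C\kappa(\tau^kr)^\beta)$ bounds the homogeneous part but does nothing to tame the inhomogeneous accumulation. This is precisely why the paper does \emph{not} run such an iteration: instead it proves a genuine \emph{dichotomy} (Proposition \ref{dic}), separating the case $|\bq|\le a/4$, in which the averaged energy literally halves, from the case $|\bq|>a/4$, in which $\Hnabla u$ is proved $\e a$-close to a constant horizontal section of comparable size, and this second alternative is then improved scale-by-scale (Lemma \ref{lemma-second-alternative-dichotomy-improved}) and iterated to a Morrey--Campanato $C^{1,\gamma}$ estimate (Corollary \ref{corollary-lemma-second-alternative-dichotomy-improved}). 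The dichotomy is essential because the cavity term $|B_\rho|\sim\rho^Q$ can only be absorbed when the energy $a$ is already large, and this is also where the ``$+1$'' in the statement of the theorem originates.

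For the second claim (uniform Lipschitz bound near $\{u=0\}$), the paper does not use a compactness/blow-up argument. It observes that whenever the second alternative of the dichotomy is triggered, Corollary \ref{corollary-lemma-second-alternative-dichotomy-improved} forces $u>0$ in a smaller ball (see \eqref{rmk-coroll-assurdo}), which is impossible if $u(0)=0$; hence the first alternative (geometric decay of energy) must occur at \emph{every} scale, i.e.\ $\mathcal K=\N$ in the notation of the proof, which immediately bounds $|\Hnabla u(0)|$ by a universal constant $C(\eta)M$. Your proposed contradiction/compactness route is a genuinely different strategy, but as sketched it has to be completed: you would need to show (i) that the limit function is a nontrivial entire $\G$-harmonic function with sublinear growth, which already requires more than Rellich compactness (some form of Liouville theorem in the group), and (ii) that blow-up limits of almost minimizers are actual minimizers in $\G$, a step that needs justification because the cavity term and the metric geometry of intrinsic balls do not scale as simply as in the Euclidean case. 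The paper's argument bypasses all of this.
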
 
	In particular, the main contribution of our paper concerns the Lipschitz intrinsic regularity of almost minimizers. We remark that our result is also relevant for the case of minimizers (when $\kappa=0$), since it is new also in this direction. Our approach is a generalization of the method  introduced in \cite{DS}. Nevertheless, some tools in \cite{DS}, mainly based on comparison arguments with harmonic replacements, cannot be applied on Carnot groups. Indeed, if the step of the group is strictly grater then one, the vector fields $X_1,\ldots, X_{m_1}$, generating the first layer of the Lie algebra associated to $\G$,  have no trivial commutators, i.e.
$$
\left[X_i,X_j\right]\not=0 \quad \Leftrightarrow \quad X_iX_j\not=X_jX_i, \quad\text{for every } i,j\in \{1,\ldots,m_1\}, \text{ if } i\not=j.
$$
Consequently, denoting by $\mathcal{L}=-\sum_{j=1}^{m_1}X^2_j$ the positive sub-Laplacian on $\G$, it results
\begin{equation}\label{noncommuta_intro}
X_i\mathcal{L}\not=\mathcal{L}X_i
\end{equation} for all $i=1,\ldots,m_1$. The lack of commutativity, encoded in \eqref{noncommuta_intro}, implies that functions $X_iu$, for $i\in \{1,\ldots,m_1\}$, are not $\G$-harmonic even if $u$ is. Therefore, the mean value characterization of $|\Hnabla u|^2$ on which the argument proposed in \cite{DS} is based, is no longer applicable in our case. To face this aspect, we apply the regularity estimates for subelliptic equations, proved in \cite{CM22}, see also \cite{SS} in the case of the Heisenberg group. In addition, following the original idea of Anzellotti in \cite{Anz83}, we exploit well-known results concerning the Morrey-Campanato-type estimates, developed in \cite{MS79} in  homogeneous spaces. Moreover, we employ some classical tools coming from geometric analysis in Carnot groups, such as  Poincar\'e and Sobolev-Poincar\'e inequalities, see, for instance, \cites{Jer,FLW95,FLW96,Lu92,Lu94}. Finally, it is worth recalling that our Lipschitz intrinsic regularity result only implies  H\"older continuous regularity from a Euclidean point of view, since the Carnot-Charath\'eodory distance is not equivalent to the Euclidean one, see, e.g., \cite{BLU}*{Proposition 5.15.1}.

	The classical motivations to study Bernoulli free boundary problems come from flows with jets and cavities, see, for instance, \cite{CSbook}*{Section 1.1}, heat flows \cite{Ack77}, electrochemical machining \cite{LS87}, combustion theory \cite{BCN}, electrical impedance tomography \cite{AP98}, and phase transitions, see, e.g., \cites{PV1,PV2,Val06}.

	The paper is organized as follows. In Section \ref {sec:preliminary}, we recall basic notions concerning Carnot groups and some useful tools. Next, in Section \ref{sec:dichotomy}, we deal with a dichotomy property, a key step to prove our main result. Finally, Section \ref{sec:lip1} contains the proof of Theorem \ref{theor-Lipsch-contin-alm-minim}. We remind, even if it is not explicitly stated, that all the constants appearing in the sequel may change from line to line when they are universal, i.e. when they depend only on the group $\G$.

		\section{Notation and preliminary results}\label{sec:preliminary}
	
	\subsection{Carnot Groups}
	
	A connected and simply connected Lie group $(\G,\circ)$  (in general non-abelian) is said to be a  {\em Carnot group  of step $k$} if its Lie algebra ${\mathfrak{g}}$ admits a {\it $k$-step stratification}, i.e. there exist linear subspaces $V_1,\ldots,V_k$ such that
	\begin{equation}\label{stratificazione}
		{\mathfrak{g}}=V_1\oplus\ldots\oplus V_k,\quad [V_1,V_i]=V_{i+1},\quad
		V_k\neq\{0\},\quad V_i=\{0\}{\textrm{ if }} i>k,
	\end{equation}
	where $[V_1,V_i]$ is the subspace of ${\mathfrak{g}}$ generated by the commutators $[X,Y]$ with $X\in V_1$ and $Y\in V_i$. The first layer $V_1$, the so-called {\sl horizontal layer},  plays a key role in the theory since it generates all the algebra $\mathfrak g$ by commutation. We point out that a stratified Lie algebra can admit more than one stratification. However, the stratification turns out to be unique up to isomorphisms, thus, the related Carnot group structure is essentially unique (see \cite{LD16}*{Proposition 1.17}). Note that when $k=1$ the group $\G$ is abelian, so we return to the Euclidean situation.

	Setting $m_i=\dim(V_i)$ and $h_i=m_1+\dots +m_i$, with $h_0=0$, for $i=1,\dots,k$ (so that $h_k=n=\dim \mathfrak g=\dim \G$), we choose a basis $e_1,\dots,e_n$ of $\mathfrak g$ adapted to the stratification, that is such that
	$$e_{h_{j-1}+1},\ldots,e_{h_j}\;\text {is a basis of}\; V_j\;\text{for each}\; j=1,\ldots, k.$$
    Let $X=\{X_1,\dots,X_{n}\}$ be the family of left-invariant vector fields such that $X_i(e)=e_i$, $i=1,\dots,n$, where $e$ is the identity of $(\G,\circ)$. Thanks to \eqref{stratificazione}, the subfamily $\{X_1,\dots,X_{m_1}\}$ generates by commutations all the other vector fields, we will refer to $X_1,\ldots,X_{m_1}$ as {\it generating vector fields} of $(\G,\circ)$.  
	
	The map $X\mapsto X(e)$, associating a left-invariant vector field $X$ to its value in $e$, defines an isomorphism from $\mathfrak g$ to $T\G_e$ (in turn identified with $\R^n$). We systematically use these identifications. Furthermore, by the assumption that $\G$ is simply connected, the exponential map is a global real-analytic diffeomorphism from $\mathfrak g$ onto $\G$ (see, e.g., \cites{V,CGr}), so each $x\in\G$ can be written in a unique way as $x=\exp(x_1X_1+\dots+x_nX_n)$. Using these {\it exponential coordinates}, we identify $x$ with the $n$-tuple $(x_1,\ldots,x_n)\in\R^n$ and $\G$ with $(\R^n,\ast)$ where the explicit expression of the group operation $\ast$ is determined by the Campbell-Hausdorff formula (see, for example, \cites{BLU,FS}). In these coordinates, $e=(0,\dots,0)$ and $(x_1,\dots,x_n)^{-1}=(-x_1,\dots,-x_n)$, and the adjoint operator in $L^2(\G)$ of $X_j$, $X_j^{*}$, for $j=1,\dots,n$, turns out to be $-X_j$ (see, for instance,\cite{FSSC03}*{Proposition 2.2}). Moreover, if $x\in \G$ and $i=1,\ldots,k$, we set $x^{(i)}:= (x_{h_{i-1}+1},\ldots,x_{h_{i}})\in \R^{m_i}$, so that we can also identify $x$ with $[x^{(1)},\ldots,x^{(k)}]\in \R^{m_1}\times \ldots \times \R^{m_k}=\R^n$.
	
		Two important families of automorphism of $\G$ are the so-called  \emph{intrinsic translations} and \emph{dilations}. For any $x\in\G$, the {\it (left) translation} $\tau_x:\G\to\G$ is defined as 
	$$ z\mapsto\tau_x z:= x\circ z. $$
For any $ \lambda> 0 $ we call \emph {dilation}, associated with $ \lambda $ the linear automorphism of $\mathfrak{g}$ such that $$\delta_\lambda(v)=\lambda^iv, \quad \text{ if } v\in V_i,\ \text{  for }i=1,\ldots,k.$$ Through the exponential map, one can transfer the notion of dilation on the group $\G$. We denote again by $\delta_\lambda$ the map 
\begin{equation*}
\exp\circ\;\delta_\lambda\circ\exp^{-1}: \G\rightarrow\G.
\end{equation*}
Using the identification given by exponential coordinates, one can read the map $ \delta_ \lambda: \mathbb {G} \rightarrow \mathbb {G} $ as   
	\begin{equation}\label{dilatazioni}
		\delta_\lambda(x_1,\ldots,x_n)=(\lambda^{\alpha_1}x_1,\ldots,\lambda^{\alpha_n}x_n),
	\end{equation} 
	where $\alpha_i\in\N$ is called {\it homogeneity of the variable} $x_i$ in $\G$ (see  \cite{FS}*{Chapter 1}) and is defined as
	\begin{equation*}
		\alpha_j=i \quad\text {whenever}\; h_{i-1}+1\leq j\leq h_{i},
	\end{equation*}
	hence $1=\alpha_1=\ldots=\alpha_m<\alpha_{m+1}=2\leq\ldots\leq\alpha_n=k.$
	
	From Definition \eqref{dilatazioni}, one can easily verify the following properties of intrinsic dilations.
	\begin{lem}\label{DilationsProperties}
		For all  $\lambda, \mu >0$, one has:
		\begin{enumerate}
			\item[(1)]  $\delta_{1} = \mathrm{id}_\G$;\\
			\item[(2)] $\delta^{-1}_{\lambda} = \delta_{\lambda^{-1}}$;\\
			\item[(3)]
			$\delta_{\lambda}\circ \delta_{\mu} =\delta_{\lambda \mu}$;\\
			\item[(4)] for every $p, p'\in \G$, it holds $\delta_\lambda(p) \cdot \delta_\lambda(p') = \delta_\lambda(p\cdot p')$.
		\end{enumerate}
	\end{lem}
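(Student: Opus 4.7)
The plan is to verify each of the four properties directly from the coordinate formula \eqref{dilatazioni}, using the homogeneity weights \eqref{omogeneita2} and the compatibility of the Baker--Campbell--Hausdorff group law with the stratification.

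For (1), (2), (3), I would simply expand. Setting $\lambda = 1$ in \eqref{dilatazioni} yields $\delta_1(x) = (1^{\alpha_1}x_1,\dots,1^{\alpha_n}x_n) = x$, which is (1). Computing $\delta_\lambda \circ \delta_{\lambda^{-1}}$ componentwise gives $\lambda^{\alpha_i}(\lambda^{-1})^{\alpha_i} x_i = x_i$, so $\delta_\lambda \circ \delta_{\lambda^{-1}} = \mathrm{id}_\G$, proving (2). The same kind of computation, $\lambda^{\alpha_i}\mu^{\alpha_i} = (\lambda\mu)^{\alpha_i}$, proves (3). These are formal and require no additional input beyond the definition.

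The substantive part is (4), the statement that $\delta_\lambda$ is a group homomorphism. The key ingredient is that in exponential coordinates the group law $\cdot$ is given by the Campbell--Hausdorff formula, and that every nonzero Lie bracket of generators $X_{i_1}, X_{i_2}, \ldots, X_{i_s}$ (with $X_{i_j} \in V_{\alpha_{i_j}}$) lands in the layer $V_{\alpha_{i_1} + \cdots + \alpha_{i_s}}$ by the stratification condition $[V_1,V_i] = V_{i+1}$ in \eqref{stratificazione}. Consequently, when one expands the $j$-th coordinate of $p \cdot p'$ using Campbell--Hausdorff, every monomial that appears is a product of $s$ factors $x_{i_1},\ldots,x_{i_s}$ (or their analogues from $p'$) whose homogeneity weights satisfy $\alpha_{i_1} + \cdots + \alpha_{i_s} = \alpha_j$. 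Each such monomial is therefore $\alpha_j$-homogeneous with respect to $\delta_\lambda$, and the whole $j$-th coordinate of $p \cdot p'$ scales by exactly $\lambda^{\alpha_j}$ under $\delta_\lambda$. This gives $\delta_\lambda(p\cdot p') = \delta_\lambda(p)\cdot \delta_\lambda(p')$.

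The main (and only) obstacle is making the homogeneity argument for (4) precise: one must justify that the Campbell--Hausdorff polynomial in exponential coordinates is $\delta_\lambda$-homogeneous of the correct degree on each layer. Rather than writing out the BCH series explicitly, I would cite the standard references already invoked in the paper (for instance \cite{BLU} and \cite{FS}), where this compatibility between the stratification and the dilations is proved, so that (4) reduces to a one-line verification. Properties (1)--(3) are then immediate and require no external input.
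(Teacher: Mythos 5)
The paper gives no proof of this lemma at all --- it is asserted with the remark ``one easily verifies,'' so there is no internal argument to compare against. Your proposal is correct: (1)--(3) really are formal consequences of the coordinate formula \eqref{dilatazioni}, and your homogeneity argument for (4) via Campbell--Hausdorff and the stratification grading $[V_i,V_j]\subseteq V_{i+j}$ is the standard justification. One remark: the paper's own Proposition \ref{group_operation}, stated immediately after this lemma, already records precisely that $\mathcal{Q}_i(\delta_\lambda x,\delta_\lambda y)=\lambda^{\alpha_i}\mathcal{Q}_i(x,y)$; granting that, (4) collapses to the one-line coordinate computation $\left(\delta_\lambda(p)\cdot\delta_\lambda(p')\right)_i=\lambda^{\alpha_i}p_i+\lambda^{\alpha_i}p'_i+\lambda^{\alpha_i}\mathcal{Q}_i(p,p')=\left(\delta_\lambda(p\cdot p')\right)_i$. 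Your BCH/grading argument is in effect the proof of that homogeneity of $\mathcal{Q}_i$, so the two routes are the same argument with the intermediate fact made explicit or left implicit; citing \cite{BLU} or \cite{FS} for it, as you do, is entirely appropriate.
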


	By left translation, the horizontal layer determines a subbundle of the tangent bundle $T\G$ over $\G$. This subbundle, spanned by the family of vector fields $X=(X_1,\dots,X_{m_1})$, plays a crucial role in the theory and is called the {\it horizontal bundle} $H\G$. The fibers of $H\G$ are 
	$$ H\G_x=\mbox{span }\{X_1(x),\dots,X_{m_1}(x)\},\qquad x\in\G.$$

    A  sub-Riemannian structure is  induced on $\G$ by endowing each fiber of $H\G$ with a scalar product $\scal{\cdot}{\cdot}_{x}$, that makes the basis $X_1(x),\dots,X_{m_1}(x)$ orthonormal,  and the  related norm $|\cdot|_x$. These are defined as follows: if $v=\sum_{i=1}^{m_1} v_iX_i(x)=(v_1,\dots,v_{m_1})$ and $w=\sum_{i=1}^{m_1} w_iX_i(x)=(w_1,\dots,w_{m_1})$ are in $H\G_x$, then $\scal{v}{w}_{x}:=\sum_{j=1}^{m_1} v_jw_j$ and $|v|_x^2:=\scal{v}{v}_{x}$.
	
	The sections of $H\G$ are called {\it horizontal sections}, and for any $x\in\G$, a vector of $H\G_x$ is  a {\it horizontal vector}, while a vector in $T\G_x$ that is not horizontal is called a vertical vector. Each horizontal section is identified by its canonical coordinates with respect to this moving frame  $X_1,\dots,X_{m_1}$. In this way, a horizontal section $\varphi$ is  represented with a function $ \varphi:\G \rightarrow\R^{m_1}$. To simplify the notation, when dealing with two horizontal sections $\phi$ and $\psi$, we drop the index $x$ in the scalar product and norm. 
	
	We collect  some properties of the group operation  and canonical vector fields, see \cite {BLU}.
	
	\begin {prop} The group operation has  the form
	$$  x \circ y = x + y + \mathcal {Q} (x, y), \quad \text{for all } x, y \in \mathbb {G},$$
	where $ \mathcal {Q} = (\mathcal {Q} _1, \ldots, \mathcal {Q} _n): \mathbb {G}\times \mathbb {G} \rightarrow \mathbb {G}$
	and every $ \mathcal {Q} _i $, $ i = 1, \ldots, n $, is a homogeneous polynomial of degree $ \alpha_i $  with respect to the intrinsic dilations of $ \mathbb {G} $, that is,
	$$ \mathcal {Q} _i (\delta_ \lambda (x), \delta_ \lambda (y)) = \lambda ^ {\alpha_i} \mathcal {Q} _i (x, y) \quad \text{for all } x, y \in \mathbb {G}\text{ and }\lambda> 0. $$ Moreover, for all $x, y \in \mathbb {G} $, we have :
	\begin{itemize}
	\item [(i)] $ \mathcal {Q} $ is antisymmetric,  namely,
	$$ \mathcal {Q} _i (x, y) = - \mathcal {Q} _i (-y, -x), \quad \text{for } i = 1, \ldots, n ; $$
	\item [(ii)] $$ \mathcal {Q} _1 (x, y) = \ldots = \mathcal {Q} _ {m_1}  (x,y) = 0,$$
	$$ \mathcal {Q} _i (x, 0) = \mathcal {Q} _i (0, y) = 0 \text{ and } \mathcal {Q} _i (x, x) = \mathcal {Q} _i (x, -x) = 0, \quad \text { for } m_1 <i \leq n,$$
	$$ \mathcal {Q} _i (x, y) = \mathcal {Q} _i (x_1,, x _ {{h_j} -1}, y_1, \ldots, y _ {{h_j} -1}), \quad \text { if } 1 <j \leq k \text { and } i \leq h_j;$$
	\item [(iii)] $$ \mathcal {Q} _i (x, y) = \sum_ {k, h} {\mathcal {R} _ {h, k} ^ i (x, y) (x_ky_h- x_hy_k)},$$ where the functions $ \mathcal {R} _ {h, k} ^ i $ are homogeneous polynomials of degree $ \alpha_i- \alpha_k- \alpha_h $  with respect  to the intrinsic dilations and the sum is extended to all $ h $ and $ k $ such that $ \alpha_h + \alpha_k \leq \alpha_i $.
	\end {itemize} \label {group_operation}
	\end {prop}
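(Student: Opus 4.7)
The proof strategy rests on the Baker-Campbell-Hausdorff (BCH) formula, applied through the exponential map and combined with the $k$-step stratification of $\mathfrak{g}$. The plan is to first turn the abstract group law into an explicit polynomial expression in exponential coordinates, and then to read off each of the properties (i)-(iii) from the algebraic structure of iterated brackets.

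First I would observe that, by \eqref{stratificazione}, the Lie algebra $\mathfrak{g}$ is nilpotent of step $k$: every iterated bracket of length strictly greater than $k$ vanishes. Since the exponential map is a global analytic diffeomorphism onto $\G$, the BCH formula
\[
\exp(X)\cdot\exp(Y) = \exp\!\bigl(X+Y+\tfrac12[X,Y]+\tfrac{1}{12}[X,[X,Y]]-\tfrac{1}{12}[Y,[X,Y]]+\cdots\bigr)
\]
therefore reduces to a \emph{finite} sum of iterated brackets of length at most $k$. Writing $X=\sum x_i e_i$ and $Y=\sum y_i e_i$ and expanding brackets in the adapted basis produces an identity of the form $x\cdot y = x+y+\mathcal Q(x,y)$ with $\mathcal Q_i$ polynomial in $(x,y)$ by construction. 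The essential input is only the nilpotency of $\mathfrak g$ coming from the stratification.

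Next I would establish the homogeneity $\mathcal Q_i(\delta_\lambda x,\delta_\lambda y)=\lambda^{\alpha_i}\mathcal Q_i(x,y)$. By \eqref{omogeneita2} the dilation acts on the basis vector $e_i$ by multiplication by $\lambda^{\alpha_i}$, and $[V_a,V_b]\subseteq V_{a+b}$ implies that any iterated bracket involving $e_{j_1},\dots,e_{j_r}$ lies in $V_{\alpha_{j_1}+\cdots+\alpha_{j_r}}$. Hence the coefficient of $e_i$ in a given iterated bracket can be non-zero only when the weights add up to $\alpha_i$, which exactly matches the homogeneity claim after using $\delta_\lambda$-scaling on each factor. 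As a by-product, only variables $x_\ell,y_\ell$ with $\alpha_\ell<\alpha_i$ can appear in $\mathcal Q_i$, giving the restricted-variable dependence stated in (ii), and $\mathcal Q_i\equiv 0$ for $i\le m_1$ since no bracket can sit in $V_1$.

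Finally, the algebraic identities (i)-(iii) follow by short manipulations on top of the BCH expansion. Antisymmetry (i) comes from $x^{-1}=-x$ in exponential coordinates combined with $(x\cdot y)^{-1}=y^{-1}\cdot x^{-1}$. The four vanishing identities in (ii) are immediate: $\mathcal Q_i(x,0)=\mathcal Q_i(0,y)=0$ because every BCH bracket has both entries non-trivial, and $\mathcal Q_i(x,x)=\mathcal Q_i(x,-x)=0$ because the two arguments then commute. Property (iii) follows by induction on bracket length: every iterated bracket contains at least one elementary commutator $[e_h,e_k]$, which contributes the skew-symmetric factor $x_ky_h-x_hy_k$, with the remaining iterated-bracket coefficients $\mathcal R^i_{h,k}$ forced to be polynomials homogeneous of degree $\alpha_i-\alpha_h-\alpha_k$ by the dilation argument of the previous step.

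The only real obstacle is bookkeeping: making sure that every term produced by BCH is accounted for with the correct stratum label and that the polynomial structure is really of the stated degree. No analysis is needed; this is a purely algebraic statement for stratified nilpotent Lie groups, and one could even shortcut the argument by citing the detailed treatment in \cite{BLU} or \cite{FS}.
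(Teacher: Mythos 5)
The paper does not actually prove Proposition~\ref{group_operation}: it is introduced with the remark ``we collect in the following results some properties of the group operation \ldots see \cite{BLU}'' and is taken as known. Your BCH-based derivation is the standard argument one finds in \cite{BLU} and \cite{FS}, and it is correct as sketched: nilpotency of the stratified algebra truncates BCH to a finite polynomial, the grading $[V_a,V_b]\subseteq V_{a+b}$ together with the weighted dilations forces the homogeneity $\mathcal Q_i(\delta_\lambda x,\delta_\lambda y)=\lambda^{\alpha_i}\mathcal Q_i(x,y)$ and the vanishing of $\mathcal Q_i$ for $i\le m_1$, the identities in (ii) come from $x\cdot 0=x$, $0\cdot y=y$, $x\cdot x=2x$, $x\cdot(-x)=e$, antisymmetry (i) follows from $(x\cdot y)^{-1}=(-y)\cdot(-x)$ and $x^{-1}=-x$, and the factored form (iii) is obtained by writing BCH in right-nested (Dynkin) form so that the innermost nonzero bracket is always $[X,Y]$, contributing the skew-symmetric factors $x_ky_h-x_hy_k$, with the residual coefficients homogeneous of degree $\alpha_i-\alpha_h-\alpha_k$ by the same dilation count. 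Since the paper simply cites the literature, your write-up is a legitimate filling-in of the omitted proof rather than an alternative route; the only loose point is that the claim ``every iterated bracket contains an elementary commutator'' should be made precise via the Dynkin right-nested form of BCH, otherwise the reduction to skew factors is not immediate.
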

	
	The following result is contained in \cite{FSSC03}*{Proposition 2.2}.
	
	\begin {prop} \label {campi-omogenei0} The vector fields $ X_j $ have polynomial coefficients and are of the form
	\begin{equation*}
		X_j (x) = \partial_j + \sum_ {i> h_l} ^ n {q_ {i, j} (x) \partial_i}, \quad \:  j = 1, \ldots, n \: \: \text{ and } \: \: j \leq h_l,
	\end{equation*}
	where $ q_ {i, j} (x) = \frac {\partial \mathcal {Q} _i} {\partial y_j} (x, y) \rvert_ {y = 0} $ and the $ \mathcal {Q} _i$'s are those ones defined in Proposition \ref {group_operation} for $ h_ {l-1} <j \leq h_l $ and $ 1 \leq l \leq k $.  So, if $ h_ {l-1} <j \leq h_l $, then $ q_ {i, j} (x) = q_ {i, j} (x_1, \ldots, x_ {h_ {l-1}}) $ and $ q_ {i, j} (0) = 0 $.
	\end {prop}
	
	\subsection{Intrinsic distance and gauge pseudo-distance} 
	An absolutely continuous curve \linebreak$  \gamma: [0, T] \rightarrow \mathbb {G} $ is called \emph {sub-unitary} with respect to $ X_1, \ldots, X_ {m_1} $ if it is a \emph {horizontal curve}, that is, if there exist real measurable functions $ c_1, \ldots, c_ {m_1}: [0, T] \rightarrow \mathbb {R} $ such that
	$$ \dot {\gamma} (s) = \sum_ {j = 1} ^ {m_1} {c_j (s) X_j (\gamma (s))} \quad  \text {for $ \mathcal {L} ^ 1$-a.e. } s \in [0, T],$$ with $ \sum_ {j = 1} ^ {m_1} {c_j(s) ^ 2} \leq  1 $ for  $ \mathcal {L} ^ 1$-a.e.  $s \in [0, T] $.
	
	\begin {defin} [Carnot-Carath\'eodory distance] Let $ \mathbb {G} $ be a Carnot group. For $ p, q \in \mathbb {G} $, we define their \emph {Carnot-Carath\'eodory distance} $  d_c(p, q) $ as
	$$  d_c (p, q): = \mathrm {inf} \{T> 0: \text{ there exists a sub-unitary curve } \gamma \text { with }\: \gamma (0) = p, \: \gamma (T) = q \}. $$
\end{defin}

We remark that this distance is well defined: the set of sub-unitary curves connecting $ p $ and $ q $ is  nonempty by Chow's theorem  \cite{BLU}*{Theorem 19.1.3}, since by \eqref{stratificazione}, the rank of the Lie algebra generated by $ X_1, \ldots, X_ {m_1} $ is $n$. The Carnot-Carath\'eodory distance $d_c$ induces on $\G$ the same topology as the standard Euclidean  one. We shall denote by $ B_r(p) $ the open  ball, centered in $p$  and of radius $r>0$, associated with $d_c$. For the sake of simplicity, if $p=e$, we will use the notation $B_r(e)=B_r$. 

 The distance $d_c$ is equivalent to a more explicit pseudo-distance, called the \emph{gauge pseudo-distance}, defined as follows.  Let $||\cdot ||$ denote the Euclidean distance to the origin in the Lie algebra $\mathfrak g$. For $u = u_1 + \cdots + u_k \in \mathfrak g$, with $u_i\in V_i$, one defines
\begin{equation}\label{gauge}
	|u|_{\mathfrak g} := \left(\sum_{i=1}^k ||u_i||^{2k!/i}\right)^{\frac{1}{2k!}}.
\end{equation}
The \emph{nonisotropic gauge} in $\G$  is
\begin{equation*}
	|p|_{\G} := |\exp^{-1} p|_{\mathfrak g}, \quad\quad \ \ \ \ \ p\in \G,
\end{equation*}
see \cite{Fo} and \cite{FS}. Since the exponential map $\exp:\mathfrak g \to \G$ is a $C^\infty$-diffeomorphism (actually, analytic), the map $p\to |p|_\G$ is $C^\infty(\G\setminus \{e\})$. Notice that, from \eqref{gauge} and \eqref{dilatazioni}, we have, for any $\lambda>0$ and $p\in\G$,
\begin{equation}\label{gau}
	|\delta_\lambda(p)|_\G = \lambda |p|_\G.
\end{equation}
The \emph{gauge pseudo-distance} in $\G$ is defined by
\begin{equation}\label{rho0}
	d(p,q):=| p^{-1}\circ q|_\G,  \quad\quad \ \ \ \ \ p,q\in \G.
\end{equation}
The function $d$ has all the properties of a distance, except the triangle inequality, which is satisfied with a universal constant, usually different from one, on the right-hand side, see \cites{FS,BLU}. Since the  dilations are group automorphisms, from \eqref{rho0} and \eqref{gau}, it follows that $d$ is homogeneous of degree one with respect to the group dilations, that is, for any $\lambda>0$  and  all $p, p'\in\G$
\begin{equation*}
	d(\delta_\lambda(p),\delta_\lambda(p')) = \lambda d(p,p').
\end{equation*}
It is well known, see, for instance, Proposition 5.1.4 in \cite{BLU}, that there exists a constant $\Lambda_\G$, only depending on the group $\G$, such that, for $p\in \G$,
\begin{equation}\label{rhod}
	\Lambda_\G^{-1} |p|_\G \le d_c(e,p) \le \Lambda_\G |p|_\G.
\end{equation}

The integer
	 \begin{equation}\label{omog-dimen}
	 	Q: = \sum_ {i = 1} ^ k {i\text{dim}(V_i)}
	 \end{equation} 
	is the \emph {homogeneous dimension} of $ \mathbb {G} $. It is the Hausdorff dimension of $ \mathbb {G} \cong \mathbb {R} ^ n $ with respect to the distance $d_c$, see \cite{Mit}.
    
The  $n$-dimensional Lebesgue measure $\mathcal{L}^ n $  is the Haar measure of the group $ \mathbb {G} $, i.e., for every $ \mathcal {L} ^ n $-measurable set $E \subset \mathbb {G}$ and $x\in\G$, it results $\mathcal {L} ^ n \left (x \circ E \right) = \mathcal {L} ^ n \left (E \right)$. Moreover, if $ \lambda> 0 $, then $ \mathcal {L} ^ n \left (\delta_ \lambda \left (E \right) \right) = \lambda ^ Q \mathcal {L} ^ n \left (E \right)$, where $ Q $ is the homogeneous dimension, see \eqref{omog-dimen}. In particular, for any $ r> 0 $  and $ p \in \mathbb {G} $, it holds $$ \mathcal {L} ^ n \left (B_r (p) \right) = r ^ Q \mathcal {L} ^ n \left (B_1 (p)\right) = r ^ Q \mathcal {L} ^ n \left (B_1 \right).$$
 Let us point out that all the spaces $ L ^ p (\mathbb {G}) $  we will deal with are defined with respect to the Lebesgue measure $ \mathcal {L} ^ n $. If $A\subset \G$ is $\mathcal{L}^n$-measurable, we write $|A|=\mathcal{L}^n(A)$. Hereafter, unless differently specified, all considered domains are at least $\mathcal{L}^n$-measurable.

A map $ L: \mathbb {G} \rightarrow \mathbb {R} $ is \emph{$\mathbb {G} $-linear} if it is a group homeomorphism from $ (\G, \circ) $ to $ (\mathbb {R}, +) $ and  is homogeneous of degree 1 with respect to the intrinsic dilations of $ \mathbb {G} $, that is, $L (\delta_ \lambda(x)) = \lambda L (x)$ for $\lambda> 0$ and $x \in \mathbb {G}$. Similarly, we say that a map $\ell:\G\to\R$ is $\G$-affine if there exists a $\mathbb {G} $-linear map $L$ and $c\in\R$ such that $\ell(x)=L(x)+ c$ for every $x\in\G$. Given a basis $(X_1,\ldots, X_{n})$, all $\G$-linear maps are represented as follows.

\begin{prop}\label{rap-lin}
	A map $L:\G\to\R$ is $\G$-linear if and only if there is $a=(a_1,\ldots,a_{m_1})\in\R^{m_1}$ such that, if $x=(x_1,\ldots,x_n)\in\G$, then $$L(x)=\sum_{i=1}^{m_1}a_ix_i.$$
\end{prop}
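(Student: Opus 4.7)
The plan is to prove the two implications separately, exploiting the structural information on the group law given in Proposition~\ref{group_operation} and the interplay between $\G$-linearity and the underlying Lie algebra.

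For the straightforward implication ($\Leftarrow$), suppose $L(x)=\sum_{i=1}^{m_1}a_i x_i$. By Proposition~\ref{group_operation}(ii), the polynomials $\mathcal{Q}_1,\ldots,\mathcal{Q}_{m_1}$ vanish identically, so $(x\cdot y)_i = x_i+y_i$ for every $i\leq m_1$; hence $L(x\cdot y)=L(x)+L(y)$, i.e.\ $L$ is a continuous group homomorphism. Homogeneity is immediate: from \eqref{dilatazioni}--\eqref{omogeneita2}, $\alpha_i=1$ for $i\leq m_1$, so $L(\delta_\lambda x)=\sum_{i=1}^{m_1}a_i\lambda x_i=\lambda L(x)$.

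For the nontrivial implication ($\Rightarrow$), I would use that a continuous group homomorphism from the connected, simply connected nilpotent Lie group $\G$ to the abelian group $(\R,+)$ is automatically smooth; consequently each map $t\mapsto L(\exp(tY))$ is a one-parameter subgroup of $\R$, which forces
\[
L(\exp Y)=dL_e(Y),\qquad Y\in\mathfrak g.
\]
Reading this in the exponential coordinates $x\leftrightarrow\sum_i x_i e_i$ yields $L(x)=\sum_{i=1}^n a_i x_i$ for scalars $a_i:=dL_e(e_i)$. Substituting into the homogeneity identity and comparing both sides coordinate by coordinate gives
\[
a_i\bigl(\lambda^{\alpha_i}-\lambda\bigr)=0\qquad\text{for every }\lambda>0,\ i=1,\ldots,n,
\]
which forces $a_i=0$ whenever $\alpha_i\neq 1$, i.e.\ whenever $i>m_1$. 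This leaves precisely the claimed form.

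The only nontrivial point is the identification $L\circ\exp=dL_e$; this is classical for Lie group homomorphisms into abelian groups, but if one wanted a self-contained argument one could proceed inductively on the strata. Indeed, Proposition~\ref{group_operation}(ii) shows that the restriction of the group product to a single layer $V_j$ reduces to ordinary addition, so $L|_{V_j}$ is $\R$-linear; comparing $L(\delta_\lambda z)=\lambda^j L(z)$ (from $\R$-linearity and $\delta_\lambda z=\lambda^j z$ on $V_j$) with the homogeneity $L(\delta_\lambda z)=\lambda L(z)$ kills $L$ on $V_j$ for $j\geq 2$. One then recovers the form of $L$ on the whole $\G$ by expressing a general $x$ through the exponential of its stratified decomposition and exploiting homomorphy. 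Either route is short; the main (purely conceptual) obstacle is simply packaging the reduction from $\G$ to $\mathfrak g$ without circularity.
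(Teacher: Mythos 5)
The paper states Proposition~\ref{rap-lin} without proof (it is a standard fact from the structure theory of stratified Lie groups), so there is no internal argument to compare against. Your $(\Leftarrow)$ direction is correct as written, and your main route for $(\Rightarrow)$ is also sound provided \emph{group homeomorphism} in the paper's definition of $\G$-linearity is read as \emph{continuous group homomorphism}: automatic smoothness of continuous Lie-group homomorphisms then gives $L\circ\exp=dL_e$, and the homogeneity comparison forces the coefficients $a_i$ with $\alpha_i\geq 2$ to vanish. This invokes a heavy theorem where one is not really needed, but it is correct under that reading.

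Your self-contained alternative, however, rests on a false intermediate claim: \emph{the restriction of the group product to a single layer $V_j$ reduces to ordinary addition} is wrong for $1\leq j<k$. The layer-$j$ coordinates of $x\cdot y$ do equal $x_i+y_i$ when $x,y\in V_j$, but the product accumulates correction terms in the strata above $j$ (already in the Heisenberg group $V_1$ is not closed under $\cdot$, since $\mathcal{Q}_3\not\equiv 0$ there). Hence $L(x\cdot y)=L(x)+L(y)$ does not by itself give additivity of $L$ on $V_j$, and the step ``$L|_{V_j}$ is $\R$-linear'' does not follow from the homomorphism property alone. To repair the induction you must proceed \emph{downward}, starting from $V_k$, which genuinely is an abelian subgroup (the center), so that at each step the correction terms fall in strata where $L$ has already been shown to vanish. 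A cleaner, continuity-free route avoids induction entirely: since $(\R,+)$ is abelian, $L$ kills the commutator subgroup $[\G,\G]=\exp(V_2\oplus\cdots\oplus V_k)=\{x\in\G: x_1=\cdots=x_{m_1}=0\}$; by Proposition~\ref{group_operation}(ii) the projection $x\mapsto(x_1,\ldots,x_{m_1})$ is a homomorphism onto $(\R^{m_1},+)$ with exactly this kernel, so $L$ factors through it as an additive map $\ell:\R^{m_1}\to\R$; finally the $\G$-homogeneity $L\circ\delta_\lambda=\lambda L$ translates into $\ell(\lambda y)=\lambda\ell(y)$ for all $\lambda>0$, and additivity together with this positive homogeneity yields $\R$-linearity of $\ell$, hence the desired representation, without any continuity assumption on $L$.
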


Moreover, if $x=(x_1,\ldots,x_n)\in\G$  and $x_0\in\G$ are given, we set 
\begin{equation}\label{map-pi}
	\pi_{x_0}(x):=\sum_{j=1}^{m_1}x_jX_j(x_0).
\end{equation}

\subsection{Folland-Stein and horizontal Sobolev classes}
Fixed $\Omega \subseteq \G$, the action of vector fields $X_j$, with $j=1,\ldots,m_1$, on a function $f:\Omega\to\R$ is specified by the Lie derivative. We say that a  function $f$ is differentiable along  the direction $ X_j $  at $ x_0 \in \G $, when $ j \in \{1, \ldots, m_1\} $, if the map $ \lambda \mapsto f (\tau_ {x_0} (\delta_ \lambda (e_j))) $ is differentiable  at $ \lambda = 0 $, where $ e_j $ is the $ j $-th vector of the canonical basis of $ \mathbb {R} ^ {m_1} $. In this case, we will write
$$ X_j f (x_0) = \left. \frac {d} {d \lambda} f (\tau_ {x_0} (\delta_ \lambda  (e_j))) \right | _ {\lambda = 0}.$$ 
If, instead, $ f \in L ^ 1_ {\mathrm{loc}} (\Omega)$, $ X_jf $ exists in a weak sense if 
$$ \int_ \Omega {fX_j \varphi} \: d\mathcal{L}^n = - \int_ \Omega {\varphi X_jf} \: d\mathcal{L}^n$$
for each $ \varphi \in C _0 ^ \infty (\Omega) $.

Let us fix a basis $X_1,\ldots, X_{m_1}$ of the horizontal layer. For any function $f:\Omega\to \R$ for which the partial derivatives $X_jf$, $j=1,\ldots,m_1$, exist, we define the horizontal gradient of $f$, denoted by $\Hnabla f$, as the horizontal section
\begin{equation}\label{horiz-grad}
	\Hnabla f:=\sum_{i=1}^{m_1}(X_if)X_i,
\end{equation}
whose coordinates are $(X_1f,\ldots,X_{m_1}f)$. Moreover, if $\phi=(\phi_1,\dots,\phi_{m_1})$ is a horizontal section such that $X_j\phi_j\in L^1_{ \mathrm{loc}}(\G)$ for $j=1,\dots,m_1$, we define $\divg\phi$ as the real-valued function
\begin{equation*} 
	\divg\phi:=-\sum_{j=1}^{m_1}X_j^*\phi_j=\sum_{j=1}^{m_1}X_j\phi_j.
\end{equation*}

The positive \emph{sub-Laplacian} operator on $\G$ is the second-order differential  operator given by
$$
\mathcal{L}:= \sum_{j=1}^{m_1}X^*_jX_j=-\sum_{j=1}^{m_1} X_j^2.
$$
It is easy to see that
\begin{equation}\label{subLapl-diver}
	\mathcal{L}u = - \divg (\Hnabla u).
\end{equation}
The operator $\mathcal L$ is left-invariant with respect to group translations and homogeneous of degree two with respect to group  dilations, i.e., for any $x\in\G$ and $\lambda>0$, we have
$$\mathcal L (u\circ \tau_x) = (\mathcal Lu)\circ \tau_x, \qquad \mathcal{L}(\delta_\lambda (u)) = \lambda^2 \delta_\lambda(\mathcal{L} u).$$
Furthermore, by the  assumption \eqref{stratificazione}, the system $\{X_1,\ldots,X_{m_1}\}$ satisfies the finite rank condition 
\[
\text {rank Lie} [X_1,\ldots,X_{m_1}] = n,
\] 
therefore by  H\"ormander's theorem \cite{H} the operator $\mathcal{L}$ is hypoelliptic. However, when the step $k$ of $\G$ is greater than one, the operator $\mathcal{L}$ fails to be elliptic at every point of $\G$.

The H\"older classes $C^{k, \alpha}$ had been introduced by Folland and Stein, see \cites{Fo, FS}. The functions in these classes are H\"older continuous with respect to the metric $d_c$.

\begin{defin}[Folland-Stein classes]\label{eq:fsclass}
	Let $\Omega \subseteq \G$  be an open set, $0<\alpha\leq 1$ and $u:\Omega\to \R$. We say that
	$u\in C^{0, \alpha}(\Omega)$ if there exists a constant $M>0$ such that 
	$$|u(x) - u(y)| \leq M\,  d_c(x,y)^{\alpha} \qquad\text{for every } x,y\in\Omega.$$Defining the H\"older seminorm of $u\in C^{0,\alpha}(\Omega)$ as 
	\begin{equation*} 
		[u]_{C^{0,\alpha}(\Omega)}:= \underset{\underset{x \neq y}{x,y\in \Omega}}{\sup} \frac{|u(x)-u(y)|}{ d_c(x,y)^{\alpha}},
	\end{equation*}
	the space $C^{0,\alpha}(\Omega)$ is a Banach space  with the norm $$\|u\|_{C^{0,\alpha}
		(\Omega)}:= \|u\|_{L^\infty(\Omega)} + [u]_{C^{0,\alpha}(\Omega).}$$ 
	For any $k\in \mathbb N$, we say that $u \in C^{k, \alpha}(\Omega)$ if $X_i u \in C^{k-1,\alpha}(\Omega)$ for every $i=1,\ldots, m_1$.
\end{defin}

Note  that $C^{0,1}(\Omega)$ coincides with the class of Lipschitz continuous functions with respect to the metric $ d_c$ on $\Omega$. Also, we denote by $C^{k,\lambda}(\Omega,H\G)$ the space of all the horizontal sections $\phi:\Omega\to H\G$, $\phi:=(\phi_1,\ldots,\phi_{m_1})$, such that $\phi_j\in C^{k,\lambda}(\Omega)$ for $j=1,\ldots,m_1$.
\begin{defin}[Horizontal Sobolev spaces]\label{def:hsob}
	Given an open set $\Omega \subseteq \G$,  $u:\Omega\to \R$ and $1\leq p<\infty$, the horizontal Sobolev spaces are defined as 
	$$ HW^{1,p}(\Omega):=\left\{u\in L^p(\Omega): \ X_ju\in L^p(\Omega)\ \text{for all }\, j=1,\ldots, m_1\right\},$$ 
	which is a Banach space with the norm $$\|u\|_{HW^{1,p}(\Omega)} := \|u\|_{L^p(\Omega)} + \|\Hnabla u\|_{L^p(\Omega, H\G)}.$$
\end{defin}

The subspace $HW^{1,p}_0(\Omega)$ of $HW^{1,p}(\Omega)$  is defined as the closure of $C^\infty_0(\Omega)$ with respect to the norm $\|\cdot\|_{HW^{1,p}(\Omega)}$.

\section{Dichotomy results}\label{sec:dichotomy}

	The first step for reaching Theorem \ref{theor-Lipsch-contin-alm-minim} is to prove a \emph{dichotomy} result. Roughly speaking, two situations can occur: either the average of the energy of an almost minimizer decreases in a smaller ball, or the distance  between its horizontal gradient and a suitable constant horizontal section becomes as small as we wish (that is, $\G$-linear functions are the ''only ones for which the average does not improve in small balls''). Before passing to the proof of this result, let us state the following remark.
    \begin{remark}\label{rmk-F}
	Let be $\ell:\G\to\R$ a $\G$-affine map. By Proposition \ref{rap-lin}, there exist a suitable $q=(q_1,\ldots,q_{m_1})\in\R^{m_1}$ and $c\in\R$  such that $$\ell(x)=\sum_{j=1}^{m_1}q_ix_i+c \qquad \text{for any }x\in\G.$$ Recalling  \eqref{horiz-grad}, by Proposition \ref{campi-omogenei0}, we  directly get 
	\begin{equation}\label{grad-aff}
		\Hnabla \ell(x)=\sum_{j=1}^{m_1} q_jX_j(x)=:\bq(x), \qquad \text{for any }x\in\G.  
	\end{equation} 
	Thus, the horizontal gradient of a $\G$-affine map $\ell$ is a horizontal section $\bq$, which has constant components with respect to the moving frame given by generating vector fields. From now on, we call such a horizontal section a \emph{constant} horizontal section. Note, however, that a constant horizontal section still depends on the point $x\in\G$ due to the moving frame. We  also observe  that all $\G$-affine functions $\ell$ as before are $\G$-harmonic in $\G$, in the sense that $\mathcal{L}\ell=0$ in $\G$. Indeed, by  \eqref{subLapl-diver} and \eqref{grad-aff}, it immediately holds $\mathcal{L}\ell=0$ in $\G$. We refer to \cite{BLU} for further characterizations of $\G$-harmonicity within Carnot groups.
\end{remark}

Our first result is the following.
\begin{prop}\label{dic} Let $u \in HW^{1,2}(B_1)$ be such that
	\begin{equation}\label{first}J(u,B_1) \leq (1+\sigma) J(v,B_1)\end{equation}
	for all $v \in HW^{1,2}(B_1)$ such that $u-v\in HW^{1,2}_0(B_1)$.
	Denoting by
	\begin{equation}\label{def_a}
    a : = \left(\fint_{B_1} | \Hnabla u(x)|^2 dx\right)^{1/2},
        \end{equation}
	there exists $\e_0\in(0,1)$ such that, for every $\e\in(0,\e_0)$ there exist $\eta\in(0,1)$,  $M\ge1$, and $\sigma_0\in(0,1)$, depending on $\e$ and $Q$,
	such that, if $\sigma\in [0, \sigma_0]$ and $a\ge M$, then the following dichotomy holds. Either
	\begin{equation}\label{alt1}\left(\fint_{B_{\eta}} | \Hnabla u(x)|^2 dx\right)^{1/2} \leq \frac a 2, \end{equation}
	or
	\begin{equation}\label{alt2}\left(\fint_{B_\eta} | \Hnabla u(x)- \mathbf{q}(x)|^2 dx\right)^{1/2} \leq \e a,\end{equation}
	where $\mathbf{q}:\G \to H\G$ is a constant horizontal section, see \eqref{grad-aff}, with 
	\begin{equation}\label{bound-q}
		\frac a 4 < |\mathbf{q}| \leq C_0 a,
	\end{equation}
	for some universal constant $C_0>0$.
\end{prop}

\begin{proof} We split the proof into several steps.\medskip
	
	\noindent{\em Step 1: pointwise estimates. }
	Let $v:B_1 \to \R$ be the $\G$-harmonic replacement of $u$ in $B_1$, that is, the unique solution of Dirichlet problem 
	\begin{equation*}
			\begin{cases}
				\mathcal{L}v=0\quad \textrm{in\ }B_1,\\ 
			u-v\in HW^{1,2}_0(B_1),
			\end{cases}
	\end{equation*}
	or, equivalently, the minimizer of Dirichlet energy among all competitors $w \in HW^{1,2}(B_1)$ such that $u-w\in HW^{1,2}_0(B_1)$, i.e.,
	\begin{equation}\label{g-minim}
		\int_{B_1}\left|\Hnabla v(x)\right|^2 \,dx=\min_{u-w\in HW^{1,2}_0(B_1)}\int_{B_1}\left|\Hnabla w(x)\right|^2 \,dx.
	\end{equation}
	
	By Theorem 1.1 in \cite{CM22},  there exists a constant $C_0>0$,  only depending  on $Q$, such that
	\begin{equation*}
		\sup_{B_{1/2}}\left|\Hnabla v\right|\leq C_0 \left(\fint_{B_1} |\Hnabla v(x)|^2 dx\right)^{1/2}.
	\end{equation*}  
	Then, recalling \eqref{def_a}, by  the minimality \eqref{g-minim} of $v$ for the Dirichlet energy, we conclude
	\begin{equation}\label{stima-grad-v}
		\left| \Hnabla v\right|\leq C_0 a \quad  \text{in }B_{1/2}.
	\end{equation}  
	\medskip
	
	\noindent{\em Step 2: oscillation estimates.}
	By Theorem 1.3  in \cite{CM22}, we have the following oscillation estimates: for all $\eta\in(0,1/2]$, it holds
	\begin{equation*}
		\max_{1\leq i\leq m_1} \mathrm{osc}_{B_\eta}X_i v \leq  C \bigg(\frac{\eta}{1/2}\bigg)^{\alpha} \left(\fint_{B_1} |\Hnabla v(y)|^2 dy\right)^{1/2},
	\end{equation*} 
	for some $\alpha=\alpha(Q)\in (0,1]$ and universal constant $C>0$.
	Therefore, we have 
	\begin{equation}
		\max_{1\leq i\leq m_1} |X_i v(x)-X_iv(0)|\leq \max_{1\leq i\leq m_1} \mathrm{osc}_{B_\eta}X_i v \leq  C \bigg(\frac{\eta}{1/2}\bigg)^{\alpha}\left(\fint_{B_1} |\Hnabla v(y)|^2 dy\right)^{1/2} \quad \text{for }x \in B_\eta. 
	\end{equation}
	 Thus, for any $i=1,\dots, m_1$, denoting by $q_i=X_iv(0)$  and $\mathbf{q}:\G\to HG$ the constant horizontal section defined as in \eqref{grad-aff}, it  results
	\begin{equation}
		| \Hnabla v-\mathbf{q}|\leq  C_1\bigg(\frac{\eta}{1/2}\bigg)^{\alpha} \left(\fint_{B_1} |\Hnabla v(y)|^2 dy\right)^{1/2}\quad \text{in }B_\eta,
	\end{equation}
	for some universal constant $C_1>0$. This gives that, for all $\eta\in (0,1/2]$,
	\begin{equation}\label{average-integral-nabla-v-q}\begin{split}
			\fint_{B_\eta}\left| \Hnabla v(x)-\mathbf{q}(x)\right|^2\,dx\le\;& \fint_{B_\eta}\bigg(C^2_1\bigg(\frac{\eta}{1/2}\bigg)^{2\alpha} \fint_{B_1} | \Hnabla v(y) |^2 \,dy\,\bigg)\,dx\\
			=\;&  C_2\,\eta^{2\alpha} \fint_{B_1} |\Hnabla v(y) |^2 d y\\
			\le\;&  C_2\,\eta^{2\alpha}\,a^2,
	\end{split}\end{equation}
	for some $\alpha\in(0,1)$ and $C_2>0$ universal.
	\medskip
	
	\noindent{\em Step 3: proximity to the $\G$-harmonic replacement.}
	 We know that
	\begin{align}\label{pr-scalare-1}
		\int_{B_1} |\Hnabla u -\Hnabla v|^2 \,dx &\leq J(u, B_1) + \int_{B_1}(|\Hnabla v|^2 - 2 \left \langle \Hnabla u,\Hnabla v \right \rangle)\,dx \\ \nonumber
		&=J(u, B_1) - \int_{B_1}(|\Hnabla v|^2 + 2 \left \langle \Hnabla (u-v),\Hnabla v \right \rangle)\,dx.       
	\end{align} 
	On the other hand, since $X^*_j=-X_j$ for $j=1,\ldots,m_1$ and $u-v\in HW_0^{1,2}(B_1)$, it occurs
	\begin{align}\label{weak-harm}
		\int_{B_1} \left\langle\Hnabla(u-v),\Hnabla v\right\rangle \,dx&=\sum_{j=1}^{m_1}\,\int_{B_1}X_j(u-v)X_jv\,dx\\
		&=-\sum_{j=1}^{m_1}\,\int_{B_1}(u-v)X^2_jv\,dx=\int_{B_1}(u-v)\mathcal{L}v \,dx=0,  \nonumber  \end{align}        
	 which together with \eqref{pr-scalare-1} gives
	\begin{align}\label{pr-scalare-2}
		\int_{B_1} |\Hnabla u(x) -\Hnabla v(x)|^2 \,dx \leq J(u, B_1) - \int_{B_1}|\Hnabla v(x)|^2 dx.       
	\end{align}
	Then, using the hypothesis \eqref{first} and the minimality of $v$ in \eqref{g-minim},  for $\sigma>0$ to be made precise later, we obtain
	\begin{equation*}\begin{split}
			\int_{B_1}\big(\left| \Hnabla u(x)-\Hnabla v(x)\right|^2\big)\,dx	
			&\le  (1+\sigma)J(v,B_1)-\int_{B_1}\left| \Hnabla v(x)\right|^2\,dx \\
			&\le C\left(\sigma\int_{B_1}\left| \Hnabla v(x)\right|^2\,dx+1\right)\\
			&\le C\left(\sigma\int_{B_1}\left| \Hnabla u(x)\right|^2\,dx+1\right)
	\end{split}\end{equation*}
	for some suitable positive constant  $C$ only depending on $Q$. Consequently, taking the average over $B_1$, we  achieve
	\begin{equation*}
		\fint_{B_1}\left| \Hnabla u(x)-\Hnabla v(x)\right|^2\,dx\le C(\sigma a^2+1).
	\end{equation*}
	Using this and \eqref{average-integral-nabla-v-q}, we  get
	\begin{equation}\label{average-integral-nabla-u-q-p->-2}\begin{split}
			\fint_{B_\eta}\left| \Hnabla u(x)-\bq(x)\right|^2\,dx &\le2
			\fint_{B_\eta}\big(\left| \Hnabla u(x)-\Hnabla v(x)\right|^2+\left| \Hnabla v(x)-\bq(x)\right|^2\big)\,dx\\
			&\le 2\left(\frac{\left|B_1\right|}{\left|B_\eta\right|}\fint_{B_1}\left| \Hnabla u(x)-\Hnabla v(x)\right|^2\,dx+ C_2\,\eta^{2\alpha }\,a^2\right)\\
                &\le 2\Big(C\eta^{-Q}(\sigma a^2+1)+C_2a^2\eta^{2\alpha }\Big),\\			&\le 2C \eta^{-Q}\sigma a^2+2C\eta^{-Q}+2C_2a^2\eta^{2\alpha },
		\end{split}
	\end{equation}
	and hence
	\begin{equation}\label{average-integral-nabla-u-to-p-B-eta-p->-2}
		\fint_{B_\eta}\left| \Hnabla u(x)\right|^2\,dx\le 4C\eta^{-Q}\sigma a^2+4C\eta^{-Q}+4 C_2a^2\eta^{2\alpha}+2\left|\bq\right|^2.
	\end{equation}
	\medskip

	\noindent{\em Step 4: perturbative estimates.}		
	Now, given $\e_0\in(0, 1/4]$, we claim that 
	for every $\e\in (0,\e_0)$, there exists $\eta$ small enough (depending on $\e$)
	such that if $\sigma$ is sufficiently small and $a$ sufficiently large (depending on $\eta$,
	and thus on $\e$), then
	\begin{equation}\label{conditions-on-eta-sigma-a}
		4C\eta^{-Q}\sigma a^2+4C\eta^{-Q}+4 C_2a^2\eta^{2\alpha}\leq  2\e^2a^2\le \frac{a^2}{8}.
	\end{equation}
	 Let $\eta>0$ sufficiently small such that $\e^2-2C\eta> 2 C_2\eta^{2\alpha}$. So, defining
	$$
	M:=\left(\frac{2C\eta^{-Q}}{\e^2-2C\eta-2 C_2\eta^{2\alpha}}\right)^{1/2},
	$$
	we can suppose $M\ge1$, by taking $\eta$ small enough.
	 Therefore, for every $a\geq M$  and $0\leq\sigma\le\eta^{Q+1}\allowbreak =:\sigma_0$, we have, exploiting the expression of $M$,
	\begin{eqnarray*}
		&&4C\eta^{-Q}\sigma a^2+ 4C\eta^{-Q}+ 4C_2a^2\eta^{2\alpha}\\
		&\leq & a^2\Big(4C\eta+ 4 C_2\eta^{2\alpha}\Big) + 4C\eta^{-Q}\\
		&=& a^2\Big(4C\eta+ 4 C_2\eta^{2\alpha}\Big)+ 2\,M^2\Big(\e^2-2C\eta-2 C_2\eta^{2\alpha }\Big)\\
		&\le& a^2\Big(4C\eta+ 4 C_2\eta^{2\alpha}\Big)+ 2\,a^2\Big(\e^2-2C\eta-2 C_2\eta^{2\alpha }\Big)\\
		&=& 2\e^2 a^2,
	\end{eqnarray*}
	which proves \eqref{conditions-on-eta-sigma-a}, choosing $\e$ small enough. Note that, by Step 2, $\eta\leq 1/2 $, thus $\sigma_0<1$.
	\medskip
	
	\noindent{\em Step 5: conclusion of the proof.}		
	In order to complete the proof of Proposition \ref{dic}, we distinguish two cases.  First, we suppose that
	\[\left|\mathbf{q}\right|\le\frac{a}{4}.\]
	Then, by \eqref{average-integral-nabla-u-to-p-B-eta-p->-2}   and \eqref{conditions-on-eta-sigma-a}, we conclude that
	\begin{align*}
		\fint_{B_\eta}\left| \Hnabla u(x)\right|^2\,dx\le \frac{a^2}{8}+ 2\frac{a^2}{16}=\frac{a^2}{4},
	\end{align*}
	 from which we immediately deduce the first alternative  \eqref{alt1}.
	
	Otherwise,  we assume, recalling \eqref{stima-grad-v},
	\[ \frac{a}{4}<\left|\bq\right|\le C_0\,a,\]
	and therefore, by \eqref{average-integral-nabla-u-q-p->-2} and \eqref{conditions-on-eta-sigma-a},  the second alternative  \eqref{alt2} directly follows.
\end{proof}	

Now, we show that the alternative \eqref{alt2} in Proposition \ref{dic} can be "improved" when $\e$ and $\sigma$ are sufficiently small. This result is the counterpart of Lemma 2.3 in \cite{DS} in the more general setting of Carnot groups of step two.

\begin{lem}\label{lemma-second-alternative-dichotomy-improved}
	Let $u\in HW^{1,2}(B_1)$ be such that $u\ge0$ a.e. in $B_1$ and
	\begin{equation}\label{hp-bis}
		J(u,B_1)\le (1+\sigma) J(v,B_1)
	\end{equation}
	for all $v\in HW^{1,2}(B_1)$ such that $u-v\in HW^{1,2}_0(B_1)$.
	
	Let
	\begin{equation*} 
		a:=\left(\fint_{B_1}\left| \Hnabla u(x)\right|^2 \,dx\right)^{1/2}
	\end{equation*}
	and suppose that 
	\begin{equation}\label{316BIS}
		a_1\ge a\ge a_0>0,\end{equation}
	for some suitable $a_1\ge a_0$.
	Assume  also
	\begin{equation}\label{second-alternative-dichotomy}
		\left(\fint_{B_1}\left| \Hnabla u(x)-\mathbf{q}(x)\right|^2 \,dx\right)^{1/2}\leq\e a,
	\end{equation}
	for some constant horizontal section $\mathbf{q}:\G\to H\G$  such that
	\begin{equation}\label{estimate-norm-q}
		\frac{a}{8}<|\mathbf{q}|\le 2C_0a,
	\end{equation}
	where $C_0>0$ is the universal constant in Proposition \ref{dic}.

	There exists $\alpha_0\in(0,1]$, only depending on $Q$, such that, given $0<\alpha<\alpha_0$, there exist $\rho=\rho(\alpha,Q)>0$, $\e_0= \e_0(Q,\alpha, a_0,a_1) >0$, and $c_0=c_0(Q, \alpha, a_0, a_1) >0,$ such that if $$ \e \leq \e_0 \quad \mbox{and} \quad  \sigma \leq c_0 \e^2,$$then  
	\begin{equation}\label{tesi-lemma-improv}
		\left(\fint_{B_\rho}\left| \Hnabla u(x)-\widetilde{\mathbf{q}}(x)\right|^2\,dx\right)^{1/2}\le\rho^{\alpha}\e a,
	\end{equation}
	where $\mathbf{\widetilde{q}}:\G \to H\G$ is a constant horizontal section, see \eqref{grad-aff}, for some suitable $\widetilde{q}=(\widetilde{q}_1,\ldots,\widetilde{q}_{m_1})\in \R^{m_1}$ such that 
	\begin{equation}\label{so3cer5b56859tj45ivnt45-2}
		\left|\mathbf{q}-\mathbf{\widetilde{q}}\right|\le \widetilde{C}\e a.
	\end{equation}
	for some universal constant $\widetilde{C}>0$.
\end{lem}

\begin{proof}[Proof of Lemma \ref{lemma-second-alternative-dichotomy-improved}]
	We divide the proof into several steps.\medskip
	
	\noindent{\em Step 1: energy estimates for the $\G$-harmonic replacement and comparison of energies.}		
	Let us  set $\tau:=\frac{1}{10\Lambda_\G^2}<1$, where $\Lambda_\G\geq 1$  is the structural constant given by \eqref{rhod}  only depending on $\G$.  Let $\bar{v}$ denote the $\G$-harmonic replacement of $u$ in $B_{\tau}$  and $v$ be defined as
	\begin{equation}\label{defin-of-special-competitor}
		v:=\begin{cases}
			\bar{v}&\mbox{in }B_{\tau},\\
			u&\mbox{in }B_1\setminus B_{\tau}.
		\end{cases}
	\end{equation}
	 We first note that $v\in HW^{1,2}(B_1)$ and $u-v\in HW^{1,2}_0(B_1)$, thus, by hypothesis \eqref{hp-bis}, we  have \[J(u,B_1)\le (1+\sigma)J(v,B_1),\]  which leads to
	\begin{align*}
		J(u,B_{\tau}) + J(u,B_1\setminus B_{\tau})=\;&J(u,B_{1})\nonumber \\
		\le\;& (1+\sigma)\,J(v,B_1)\nonumber  \\ 
        =\;& J(v,B_1)+\sigma J(v,B_1)\\
		=\;& J(v,B_{\tau})+J(v,B_1\setminus B_{\tau})+\sigma J(v,B_1)\\
		=\;&J(v,B_{\tau})+J(u,B_1\setminus B_{\tau})+\sigma J(v,B_1),\nonumber	\\			
	\end{align*}
	 namely,
    	$$J(u,B_{\tau})\le J(v,B_{\tau})+\sigma  J(v,B_1).$$
	 By definition of $J$ in \eqref{def-J},  this reads as
	\begin{align*}
		\int_{B_{\tau}}\left|\Hnabla u(x)\right|^2\,dx+\left|\left\{u>0\right\}\cap B_{\tau}\right|\leq &\int_{B_{\tau}}\left|\Hnabla v(x)\right|^2\,dx+\left|\left\{u>0\right\}\cap B_{\tau}\right|+\sigma J(v,B_1)\\
		\leq & \int_{B_{\tau}}\left|\Hnabla v(x)\right|^2\,dx+\left|B_{\tau}\right|+\sigma J(v,B_1),
	\end{align*}
	which, since $u\geq 0$ a.e. in $B_1$ by assumption,  yields
	\begin{equation}\label{diff-norm-grad-zero-lev-set}
		\int_{B_{\tau}}\big(\left| \Hnabla u(x)\right|^2-\left| \Hnabla v(x)\right|^2\big)\,dx
		\le \left|\left\{u=0\right\}\cap B_{\tau}\right|+\sigma J(v,B_1).
	\end{equation}
	Moreover, by definition of $v$ in \eqref{defin-of-special-competitor}, we have
	\begin{equation}\begin{split}\label{J-v-B1}
			J(v,B_1) =\;& \int_{B_1}\Big(|\Hnabla v(x)|^2+\chi_{\{v>0\}}(x)\Big)\,dx\\
			\le\;&\int_{B_{\tau}}|\Hnabla v(x)|^2\,dx +\int_{B_1\setminus B_{\tau}}|\Hnabla v(x)|^2\,dx+|B_1|\\ 
			=\; & \int_{B_{\tau}}|\Hnabla \bar{v}(x)|^2\,dx +\int_{B_1\setminus B_{\tau}}|\Hnabla u(x)|^2\,dx+|B_1|\\ 
			\le\;&\int_{B_1}\left|\Hnabla u\right|^2\,dx+\left|B_1\right|\le\;\left|B_1\right| (a^2+1),
	\end{split}\end{equation}	
	where in the second inequality we use the fact that $\bar{v}$ is the $\G$-harmonic replacement of $u$ in $B_{\tau}$ and therefore $\bar{v}$  minimizes the Dirichlet energy on $B_\tau$.
	
	Finally, recalling \eqref{weak-harm}, by \eqref{diff-norm-grad-zero-lev-set} and \eqref{J-v-B1} we obtain that
	\begin{align}\label{second-ineq-condition-almost minim}
		\int_{B_{\tau}}\left|\Hnabla u(x)-\Hnabla v(x)\right|^2\,dx  =\; & \int_{B_{\tau}}\left(\left|\Hnabla u(x)\right|^2-2 \left \langle \Hnabla u, \Hnabla v \right \rangle + \left|\Hnabla v(x)\right|^2\right)\,dx \\ \nonumber =\; & \int_{B_{\tau}}\left(\left|\Hnabla u(x)\right|^2-2 \left \langle \Hnabla (u-v), \Hnabla v \right \rangle-\left|\Hnabla v(x)\right|^2\right)\,dx\\ \nonumber=\; & 
		\int_{B_{\tau}}\left(\left|\Hnabla u(x)\right|^2-\left|\Hnabla v(x)\right|^2\right)\,dx\\ \nonumber \leq \; &                  \left|\left\{u=0\right\}\cap B_{\tau}\right|+\sigma J(v,B_1)\\
		\leq\; &  \left|\left\{u=0\right\}\cap B_{\tau}\right|+|B_1|\sigma(a^2+1).\nonumber
	\end{align}
	\medskip
	
	\noindent{\em Step 2: measure estimates for the zero level set.}		
	Now, we claim that
	\begin{equation}\label{estimate-zero-set-almost minim}
		\left|B_{\tau}\cap \left\{u=0\right\}\right|\le C_1\e^{2+\delta},
	\end{equation}
	for some $C_1>0$ and $\delta>0$.  \medskip
	
	\noindent{\em Step 2.1: comparison with a $\G$-affine function.}		
	To prove \eqref{estimate-zero-set-almost minim}, we consider  a function $\ell :\G \to \R$ defined by 
	\begin{equation}\label{defin-linear-funct}
		\ell(x):= b+\left \langle \mathbf{q}(x), \pi_x(x) \right \rangle=b+\sum_{j=1}^{m_1}q_jx_j,\qquad  b:= \fint_{B_1}u(x)\,dx.
	\end{equation}
	We remark that
	\begin{align*}
		 (u-\ell)_{B_1}:=\fint_{B_1}\big(u(x)-\ell(x)\big)\,dx=b-\left(b+\fint_{B_1}\sum_{j=1}^{m_1} q_jx_j\,dx\right)=0,
	\end{align*}
	where  the last equality is a consequence of the symmetry with respect to the identity element of the Carnot-Carath\'eodory ball $B_1$. Then, by the Poincar\'e inequality (see, e.g., \cite{Jer}) we have
	\begin{equation*} 
		\left\|u-\ell-(u-\ell)_{B_1}\right\|_{L^2(B_1)}=
		\left\|u-\ell\right\|_{L^2(B_1)}\le C\left\|\Hnabla (u-\ell)\right\|_{L^2(B_1)},
	\end{equation*}
	for some $C>0$ universal.
	
	Since by Proposition \ref{campi-omogenei0}, $\Hnabla \ell =\mathbf{q}$,  this, together with hypothesis \eqref{second-alternative-dichotomy}, leads to
	\begin{equation}\label{stima-lp-u-l}
		 \fint_{B_1}\left|u(x)-\ell(x)\right|^2\,dx\le C\e^2a^2.
	\end{equation}
	Finally, we  point out that, since by assumption $u\ge 0$, it  holds $\ell^-\le \left|u-\ell\right|$, so by \eqref{stima-lp-u-l}, we  obtain
	\begin{equation}\label{ineq-average-negative-part}
		\fint_{B_1}(\ell^-(x))^2\,dx\le C\e^2a^2,
	\end{equation}
	for some $C>0$ universal.
	\medskip
	
	\noindent{\em Step 2.2: lower bounds on the $\G$-affine function.}		
	 We want to show that if $\e$ is sufficiently small,  then
	\begin{equation}\label{lower-bound-linear-funct-1}
		\ell\ge c_1a\quad\mbox{in }B_{\tau},
	\end{equation}	
	for some $c_1>0$. To prove this, we argue by contradiction assuming that
	$$ \min_{x\in \overline{B_{\tau}}}\ell(x)<ca$$
	for any $c>0$. 
	We notice that, for every $x\in B_{\tau}$,  recalling $\tau=\frac{1}{10\Lambda_\G^2}$  and \eqref{rhod}, we get
	\begin{equation*}
		\left | \ell(x)-b \right |=\left | \left \langle \mathbf{q}(x),\pi_x(x) \right \rangle \right |\leq \left | \mathbf{q} \right |\left | x^{(1)} \right |\leq \left | \mathbf{q} \right |\left | x \right|_{\G}\leq \left | \mathbf{q} \right | \Lambda_\G \,d_c(e,x)\leq \frac{\left | \mathbf{q} \right |}{10\Lambda_\G},
	\end{equation*}
	where $|x^{(1)}|$ denotes the Euclidean norm on $\R^{m_1}$. Consequently,
    
$$ -\frac{\left | \mathbf{q} \right |}{10\Lambda_\G}\le \ell(x)-b\le\frac{\left | \mathbf{q} \right |}{10\Lambda_\G} \qquad \text{ for any }x\in B_{\tau},$$
	and therefore
	$$  ca>\min_{x\in \overline{B_{\tau}}}\ell(x)\ge b-\frac{\left | \mathbf{q} \right |}{10\Lambda_\G},$$
	 which gives
	\begin{equation}\label{stima-sopra-b}
		b\le  ca +\frac{|\mathbf{q}|}{10\Lambda_\G},
	\end{equation}
	for any $c>0$.
	Now, taking into account the usual identifications given by exponential coordinates, let us consider 
	\begin{align*}
		{\mathcal{B}}:=\Big\{x=\left [ x^{(1)},x^{(2)} \right ]\in\, &\R^{m_1}\times \R^{m_2}\equiv \G:\;x^{(1)}_j=-\frac{tq_j}{|\mathbf{q}|}+\eta_j,\; \text{for }j=1,\ldots,m_1; \\
		&x^{(2)}_i=\xi_i,\; \text{for }i=m_1+1,\ldots,m_2, \;{\mbox{ 
				for some }} (t,\eta,\mathbf{\xi})\in\mathcal{A}\Big\},
	\end{align*}
	where we set 
	\begin{align*}
		\mathcal{A}:=\Bigg\{ (t,\eta,\mathbf{\xi})\in \R\times \R^{m_1}\times \R^{m_2}:\; &t\in\left[\frac{1}{4\Lambda_\G},\frac{3}{10\Lambda_\G}\right],\\
        &\sum_{j=1}^{m_1}\eta_j^2\leq \frac{1}{100\Lambda_\G^2}\;\;{\mbox{and}}\sum_{i=m_1+1}^{m_2}\xi_i^2\leq \frac{1}{100\Lambda_\G^4} \Bigg\}.
	\end{align*}
	We observe that if $x\in{\mathcal{B}}$, then 
	\begin{equation*}\begin{split}                d_c(e,x)^4&\leq\Lambda_\G^4|x|_{\G}^4 = \Lambda_\G^4\Big(\sum_{j=1}^{m_1}\big(-\frac{tq_j}{|\bq|}+\eta_j\big)^2\Big)^2+\Lambda_\G^4\sum_{i=m_1+1}^{m_2}\xi_i^2\\ &\leq 4\Lambda_\G^4\Big(t^2+\sum_{j=1}^{m_1}\eta_j^2\Big)^2 +\Lambda_\G^4\sum_{i=m_1+1}^{m_2}\xi_i^2 \leq 4\Lambda_\G^4 \Big(\frac{9}{100\Lambda_\G^2}+\frac1{100\Lambda_\G^2}\Big)^2+ \frac{1}{100}<1,
		\end{split}
	\end{equation*}
	so we have
	\begin{equation}\label{calB-in-B1}
		{\mathcal{B}}\subseteq B_1.
	\end{equation}
	Furthermore, by \eqref{stima-sopra-b}, we  have, for $x\in{\mathcal{B}}$,
	\begin{eqnarray*}
		\ell(x)=b-t|\mathbf{q}|+\sum_{j=1}^{m_1}q_j\eta_j \le ca +\frac{|\mathbf{q}|}{10\Lambda_\G}-\frac{|\mathbf{q}|}{4\Lambda_\G}+\frac{|\mathbf{q}|}{10\Lambda_\G}
		= ca- \frac{ |\mathbf{q}|}{20\Lambda_\G}.
	\end{eqnarray*}
	 which implies, using hypothesis \eqref{estimate-norm-q}, 
	$$ \ell(x)\leq ca - \frac{ a}{160\Lambda_\G}.$$
	Then, taking $c\in\left(0, \frac{1}{320\Lambda_\G}\right)$, we  infer that $$ \ell(x)\le -\frac{a}{320\Lambda_\G}.$$
	
	Accordingly,  exploiting \eqref{calB-in-B1} into \eqref{ineq-average-negative-part}, we  obtain
	\begin{eqnarray*}
		&& C\,|B_1|\,\varepsilon^2a^2\ge
		\int_{B_1}(\ell^-(x))^2\,dx\ge \int_{{\mathcal{B}}}(\ell^-(x))^2\,dx\ge\int_{{\mathcal{B}}}\left(\frac{a}{320\Lambda_\G}\right)^2\,dx\ge \overline{c} a^2,
	\end{eqnarray*}
	for some positive universal constant $\overline{c}$. This establishes the desired contradiction if $\e$ is sufficiently small, and thus the proof of \eqref{lower-bound-linear-funct-1} is complete.
	\medskip
	
	\noindent{\em Step 2.3: conclusion of the proof of \eqref{estimate-zero-set-almost minim}.}
	We can now address the completion of the proof of the measure estimate in \eqref{estimate-zero-set-almost minim}.
	\medskip
	
	Recalling the Poincar\'e-Sobolev inequality (see, e.g., \cites{FLW95,Lu94}),  it holds
	\begin{equation*} 
		\left(\int_{B_1}\left| u(x)-\ell(x)\right|^{2^*}\,dx\right)^{1/2^*}\le C
		\left(\int_{B_1}\left|\Hnabla \big(u(x)-\ell(x)\big)\right|^{2}\,dx\right)^{1/2}\end{equation*}
	for some $C>0$ universal, where
	\[2^*:= \frac{2Q}{Q-2}.\]		
	Then, by virtue of \eqref{second-alternative-dichotomy}, \eqref{defin-linear-funct}, we get 
	\begin{eqnarray*}
		&&\left(\displaystyle\int_{B_1}\left|u(x)-\ell(x)\right|^{2^*}\,dx\right)^{1/2^*}
		\le C	 \left(\int_{B_1}\left|\Hnabla \big(u(x)-\ell(x)\big)\right|^{2}\,dx\right)^{1/2}\\
		&&\qquad \qquad = C \left(\int_{B_1}\left|\Hnabla u(x)-\bq(x)\right|^{2}\,dx\right)^{1/2}
		\le C\varepsilon a.
	\end{eqnarray*}
	This and \eqref{lower-bound-linear-funct-1}  entail
	\begin{align*}
		C\e a\ge \left(\int_{B_{\tau}\cap  \left\{u=0\right\}}\left|\ell(x)
		\right|^{2^*}\,dx\right)^{1/2^*}\ge c_1a\left|B_{\tau}\cap  \left\{u=0\right\}\right|^{1/2^*},
	\end{align*}
	 which means, up to renaming constants,		
	\begin{equation}\label{Sobolev-Poincare-ineq-p-<-n-1}
		\left|B_{\tau}\cap  \left\{u=0\right\}\right|\le C\e^{2^*}.
	\end{equation}
	We notice that
	\[2^*=\frac{2Q+4-4}{Q-2}=2+\frac{4}{Q-2}.\]
	Therefore, setting 
	\begin{equation}\label{casepminoren}\delta:=\frac{4}{Q-2}>0,
	\end{equation}
	 \eqref{estimate-zero-set-almost minim} immediately follows from \eqref{Sobolev-Poincare-ineq-p-<-n-1}.
	
	\medskip

    \noindent{\em Step 3: energy comparison.} By \eqref{second-ineq-condition-almost minim} and \eqref{estimate-zero-set-almost minim}, we have
	\begin{equation}\label{forth-ineq-condition-almost minim}
		\int_{B_{\tau}}\left|\Hnabla u(x)-\Hnabla v(x)\right|^2\,dx\le C_1\varepsilon^{2+\delta}
		+|B_1|\sigma(a^2+1).
	\end{equation}
	Consequently, using \eqref{second-alternative-dichotomy} and \eqref{forth-ineq-condition-almost minim}, it holds
	\begin{equation}\label{stima-L2-gradv-q}\begin{split}
			\int_{B_{\tau}}\left|\Hnabla v(x)-\bq(x)\right|^2\,dx \le\;& 2\left(
			\int_{B_{\tau}}\left|\Hnabla u(x)-\bq(x)\right|^2\,dx
			+\int_{B_{\tau}}\left|\Hnabla v(x)-\Hnabla u(x)\right|^2\,dx\right)\\
			\le\; &  C_2\varepsilon^2a^2+C_1\varepsilon^{2+\delta}+C_3\sigma(a^2+1),
	\end{split}\end{equation}
	up to renaming constants. Let us suppose $\sigma\le c_0\varepsilon^2$, with $c_0$ to be made precise later.
	Recalling \eqref{stima-L2-gradv-q}, we infer
    \begin{equation}\label{first-ineq-p-harm-repl-minus-lin-funct}
		\int_{B_{\tau}}\left|\Hnabla v(x)-\bq(x)\right|^2\,dx\le C_2\varepsilon^2a^2+C_1\varepsilon^{2+\delta}+C_3c_0\varepsilon^2(a^2+1)
		\le C\varepsilon^2a^{2},
	\end{equation}
	for some $C>0$ universal.

	\noindent{\em Step 4: conclusion of the proof of Lemma \ref{lemma-second-alternative-dichotomy-improved}.}	Since, by Remark \ref{rmk-F}, $v-\left \langle \bq,\pi_\cdot \right \rangle$ is $\G$-harmonic in $B_\tau$ and $$\big(u-\left \langle \bq,\pi_\cdot\right\rangle \big)-\big(v-\left \langle \bq,\pi_\cdot\right\rangle\big)= u-v\in HW^{1,2}_0(B_\tau),$$ we have that $v-\left \langle \bq,\pi_\cdot \right \rangle$ is $\G$-harmonic replacement of $u-\left \langle \bq,\pi_\cdot \right \rangle$ in $B_{\tau}$, being $v$ the $\G$-harmonic replacement of $u$ in $B_\tau$. Thus, by Theorem 1.1 in \cite{CM22}, it exists a constant $C_0>0$, depending only on $Q$, such that, for every $x\in B_{\tau}$,
	\begin{equation}\label{CM-1-bis}
		\left|\Hnabla v(x)-\mathbf{q}(x)\right|^2\le \sup_{B_{\tau}}\left|\Hnabla v-\mathbf{q}\right|^2\le C_0\fint_{B_{\tau}}
		\left|\Hnabla v(y)-\bq(x)\right|^2\,dy\le C_0\e^2 a^{2},
	\end{equation}
	where the last inequality is a consequence of hypothesis \eqref{second-alternative-dichotomy}. Consequently, \begin{equation}\label{stima-gradv-q}
		\left|\Hnabla  v- \mathbf{q}\right|\le C\e a\quad\mbox{in }B_\tau
	\end{equation}
	for some constant $C>0$ universal.
	
	Denoting by $\bar{q}_j:=X_jv(0)-q_j$ for $j=1,\ldots,m_1$, let us define the constant horizontal section $$\bar{\bq}(x):=\sum_{j=1}^{m_1}\bar{q}_jX_j(x), \quad \text{for } x\in \G.$$ By \eqref{stima-gradv-q}, for all $x\in B_{\tau}$, we have \begin{equation}\label{stima-bar-q}
		|\bar{\bq}(x)|=|\bar{\bq}(0)|= |\Hnabla v(0)-\mathbf{q}(0)|\le C\varepsilon a.
	\end{equation}
    Hence, combining \eqref{stima-gradv-q} and \eqref{stima-bar-q}, we deduce that, for $x\in B_{\tau}$,
    \begin{equation}\label{stima-aggiunta}
		\left|\Hnabla v(x)-\bq(x)-\bar{\bq}(x)\right|\leq \left|\Hnabla v(x)-\bq(x)\right|+\left|\bar{\bq}(x)\right|\le C\varepsilon a,
	\end{equation}
	up to renaming $C>0$ universal.
	
	By Theorem 1.3  in \cite{CM22}, it exists a constant $C_0>0$, depending only on $Q$, such that, for each $\rho\in(0,\tau/2)$, it holds
	\begin{equation}\label{ineq-average-nabla-p-harm-repl-minus-tilde-q}
		\fint_{B_{\rho}}\left|\Hnabla v(x)-\bq(x)-\bar{\bq}(x)\right|^2\,dx\le 
		\fint_{B_{\rho}}\left(C_0\left(\frac{\rho}{\tau}\right)^\mu
		\left\|\Hnabla v-\bq-\bar{\bq}\right\|_{L^\infty(B_{\tau})}
		\right)^2\,dx\le C_2\,\rho^{2\mu }\,\varepsilon^2 a^{2},
	\end{equation}
	for some $\mu=\mu(Q)\in(0,1]$ and $C_2>0$ universal, where in the last inequality we used estimate \eqref{stima-aggiunta}. Putting together \eqref{forth-ineq-condition-almost minim} and \eqref{ineq-average-nabla-p-harm-repl-minus-tilde-q}, we get
    \begin{equation}\label{ineq-nabla-almost minim-minus-tilde-q}\begin{split}&
			\fint_{B_{\rho}}\left|\Hnabla u(x)-\bq(x)-\bar{\bq}(x)\right|^2\,dx\\
			\le\;& 2\left(
			\fint_{B_{\rho}}\left|\Hnabla u(x)-\Hnabla v(x)\right|^2\,dx+
			\fint_{B_{\rho}}\left|\Hnabla v(x)-\bq(x)-\bar{\bq}(x)\right|^2\,dx
			\right)\\
			\le\;& 2C_1\varepsilon^{2+\delta}
			\rho^{-Q}+2{C}\sigma(a^2+1)\rho^{-Q}+2C_2\rho^{2\mu }\varepsilon^2a^{2},\end{split}
	\end{equation}
	up to renaming constants.
	
	Now, setting $\alpha_0:=\mu$, for every $\alpha \in (0,\alpha_0)$, we choose
	\begin{equation}\label{choose-const}  
		\rho:=\min\left\{(8C_2)^{\frac{1}{2(\alpha- \alpha_0)}},\frac{\tau}{2}\right\}, \qquad
		\e_0:= \left(\frac{\rho^{2\alpha+Q}a_0^2}{{8}C_1}\right)^{\frac1{\delta}},
		\quad{\mbox{and}}\quad
		c_0:=\frac{\rho^{2\alpha +Q} a_0^2}{8{C}(a_1^2+1)}.
	\end{equation}
	With these choices, it holds, for every $\e\leq\e_0$ and $\sigma\leq c_0\e^2$,  
	\begin{eqnarray*}
		&&	2C_2\rho^{2{\mu}} \le \frac{1}{4}\rho^{2\alpha},\\
		&&	2C_1\varepsilon^{2+\delta} \rho^{-Q}\le\frac{1}{4}\rho^{2\alpha }\varepsilon^2 a^2\\
		{\mbox{and }}&& 2 {C}\sigma(a^2+1)\rho^{-Q}\le\frac{1}{4}\rho^{2\alpha }\varepsilon^2 a^2.
	\end{eqnarray*}	
	Consequently, using \eqref{ineq-nabla-almost minim-minus-tilde-q}, we reach
	\begin{align*}
		&\fint_{B_{\rho}}\left|\Hnabla u(x)-\bq(x)-\bar{\bq}(x)\right|^2\,dx\le
		\frac{1}{4}\rho^{2\alpha }\varepsilon^2a^{2 }+\frac{1}{4}\rho^{2\alpha }\varepsilon^2a^2+\frac{1}{4}\rho^{2\alpha }\varepsilon^2a^{2 }\le \rho^{2\alpha }\varepsilon^2a^{2 },
	\end{align*}
	which gives the desired result in \eqref{tesi-lemma-improv} by calling $\widetilde{q}_i:=q_i+\bar{q}_i$, for all $i=1,\ldots,m_1$.
	
	Moreover, from \eqref{stima-bar-q} we have $$ |\bq-\widetilde{\bq}|=|\bar{\bq}|\le C\e a,$$
	which establishes \eqref{so3cer5b56859tj45ivnt45-2}.	This completes the proof of Lemma \ref{lemma-second-alternative-dichotomy-improved}.
\end{proof}


 Next, by iterating Lemma \ref{lemma-second-alternative-dichotomy-improved} we obtain the following
estimates.

\begin{cor}\label{corollary-lemma-second-alternative-dichotomy-improved}
	Let $u$ be an almost minimizer for $J$ in $B_1$ (with constant $\kappa$ and exponent $\beta$) and
	$$ a:=\left(\fint_{B_1}\left| \Hnabla u(x)\right|^2 \,dx\right)^{1/2}.$$
	Suppose that  there exist $a_1>a_0>0$ such that \begin{equation}\label{A0a1}
		a\in[ a_0,a_1]\end{equation} and $u$ satisfies \eqref{bound-q} and \eqref{second-alternative-dichotomy}.	
	
	Then, there exist $\e_0$, $\kappa_0$,  and $\gamma\in(0,1)$, depending on $Q$, $\beta$, $a_0$, and $a_1$, such that, for every $\e\leq\e_0$ and $\kappa\leq\kappa_0\e^2$, it  holds
	\begin{equation}\label{first-conclusion-corollary}
		\left\|u-\ell\right\|_{C^{1,\gamma}(B_{1/2})}\le C\e a
	\end{equation}
	for a positive constant $C$  only depending on  $Q$
	and a $\G$-affine function $\ell$ of slope $\bq$.
	
	Moreover, 
	\begin{equation}\label{second-conclusion-corollary}
		\left\|\Hnabla u\right\|_{L^{\infty}(B_{1/2})}\le \widetilde{C}a,
	\end{equation}
	with $\widetilde{C}>0$  only depending on $Q$.
\end{cor}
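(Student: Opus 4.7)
The plan is to iterate Lemma \ref{lemma-second-alternative-dichotomy-improved} along a geometric sequence of scales $\rho^k$, thereby producing a Cauchy sequence of constant horizontal sections whose limit serves as the slope of the sought $\G$-affine function $\ell$, and then to upgrade the resulting discrete decay estimates into a Campanato-type bound via the homogeneous-space theory of \cite{MS79}. Concretely, fix $\gamma\in(0,\min\{\alpha_0,\beta/2\})$ where $\alpha_0$ is the exponent provided by Lemma \ref{lemma-second-alternative-dichotomy-improved}, and apply that lemma with $\alpha=\gamma$ to fix the scale $\rho\in(0,1)$ and the smallness thresholds furnished by \eqref{choose-const}. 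Introduce the rescaled functions $u_k(x):=\rho^{-k}u(\delta_{\rho^k}(x))$; since the horizontal fields are homogeneous of degree one under the intrinsic dilations, $\Hnabla u_k=(\Hnabla u)\circ\delta_{\rho^k}$, so $u_k$ is an almost minimizer in $B_1$ with constant $\kappa\rho^{k\beta}$ and exponent $\beta$, and $(\fint_{B_1}|\Hnabla u_k|^2\,dx)^{1/2}=(\fint_{B_{\rho^k}}|\Hnabla u|^2\,dx)^{1/2}$.

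Starting from $\mathbf{q}_0:=\mathbf{q}$, we inductively construct a sequence $\{\mathbf{q}_k\}$ of constant horizontal sections by applying Lemma \ref{lemma-second-alternative-dichotomy-improved} to $u_k$ with the shrinking proximity parameter $\varepsilon_k:=\rho^{k\gamma}\varepsilon$, so that at each step
\[
\fint_{B_1}|\Hnabla u_k-\mathbf{q}_k|^2\,dx\le\varepsilon_k^2 a^2,\qquad |\mathbf{q}_{k+1}-\mathbf{q}_k|\le\widetilde{C}\,\varepsilon_k\,a.
\]
To sustain the induction, at every level we must verify the lemma's hypotheses for $u_k$: the almost-minimizer constant satisfies $\kappa\rho^{k\beta}\le c_0\varepsilon_k^2$ uniformly in $k$ thanks to the choice $2\gamma\le\beta$ and $\kappa\le\kappa_0\varepsilon^2$ for $\kappa_0$ small; the telescoping bound $\sum_{j\le k}\widetilde{C}\varepsilon_j a\lesssim\varepsilon a$ keeps $|\mathbf{q}_k|$ and the running L$^2$-average $(\fint_{B_1}|\Hnabla u_k|^2)^{1/2}$ inside the admissible windows $(\cdot/8,2C_0\cdot)$ and a slight enlargement of $[a_0,a_1]$ required by the lemma, provided $\varepsilon_0$ is chosen small depending on $a_0$ and $a_1$. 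The sequence $\{\mathbf{q}_k\}$ is therefore Cauchy and converges to a section $\mathbf{q}_\infty$ with $|\mathbf{q}_\infty-\mathbf{q}|\le C\varepsilon a$.

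Unscaling yields $\fint_{B_{\rho^k}}|\Hnabla u-\mathbf{q}_\infty|^2\,dx\le C\rho^{2k\gamma}\varepsilon^2 a^2$ for every $k$, and sandwiching any $r\in(0,1/2]$ between consecutive dyadic scales promotes this into the Campanato-type estimate
\[
\fint_{B_r}|\Hnabla u-\mathbf{q}_\infty|^2\,dx\le C\,r^{2\gamma}\varepsilon^2 a^2.
\]
Since almost-minimality is invariant under intrinsic left translation, the same construction centred at any $x_0\in B_{1/2}$ produces an analogous pivot, and the homogeneous-space Morrey--Campanato characterization of \cite{MS79} upgrades these bounds into $\Hnabla u\in C^{0,\gamma}(B_{1/2})$ with seminorm controlled by $C\varepsilon a$. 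Combined with the Poincar\'e inequality for Carnot groups, this produces \eqref{first-conclusion-corollary} for the $\G$-affine function $\ell$ of slope $\mathbf{q}$ (absorbing $|\mathbf{q}_\infty-\mathbf{q}|\le C\varepsilon a$ into the constant), and \eqref{second-conclusion-corollary} follows immediately since $|\Hnabla u|\le|\mathbf{q}_\infty|+C\varepsilon a\le\widetilde{C} a$.

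The principal obstacle is sustaining the inductive hypotheses of Lemma \ref{lemma-second-alternative-dichotomy-improved} across all scales. The balance $\kappa\rho^{k\beta}\le c_0\varepsilon_k^2$ between the shrinking almost-minimizer defect and the shrinking proximity parameter is what forces the compatibility constraint $2\gamma\le\beta$; simultaneously, one must control the slow drift of $|\mathbf{q}_k|$ and of the running L$^2$-averages so that they remain inside the admissible windows of the lemma, which requires the thresholds $\varepsilon_0$ and $\kappa_0$ to be chosen in a quantitatively coupled way that depends on the a priori bounds $a_0$ and $a_1$.
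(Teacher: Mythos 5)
Your plan matches the paper's argument essentially step for step: iterate Lemma \ref{lemma-second-alternative-dichotomy-improved} along scales $\rho^k$ to build a sequence of pivot sections with $\rho^{k\gamma}\varepsilon a$ proximity at each level (balancing the almost-minimality defect $\kappa\rho^{k\beta}$ against $c_0\varepsilon_k^2$ via $2\gamma\le\beta$), then pass the resulting scale-by-scale decay through the Morrey--Campanato characterization of \cite{MS79} to obtain the H\"older bound on $\Hnabla u - \bq$; your explicit passage to the limit $\bq_\infty$ and the more careful exponent window $\gamma<\min\{\alpha_0,\beta/2\}$ are only cosmetic differences from the paper's use of the $\inf_{\bxi}$ seminorm and its declared $\alpha=\beta/2$. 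The one imprecision is the final appeal to the ``Poincar\'e inequality'' for the $L^\infty$ estimate on $u-\ell$: Poincar\'e yields an $L^2$ control centred at averages, whereas the sup bound in \eqref{first-conclusion-corollary} should instead come from integrating the uniform bound on $\Hnabla u - \bq$ along a horizontal (sub-unitary) curve joining $e$ to $x$, as the paper does.
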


\begin{remark}
	We point out that a consequence of \eqref{first-conclusion-corollary}
	in Corollary \ref{corollary-lemma-second-alternative-dichotomy-improved}
	is that, if $\e$ is sufficiently small, 
	\begin{equation}\label{rmk-coroll-hgrad-not-zero}
		\Hnabla u\neq \mathbf{0}\quad {\mbox{in }}B_{1/2},\end{equation}
	where $\mathbf{0}$ is the null horizontal section.
	Indeed, by \eqref{first-conclusion-corollary}, we have 
	\[\left| \Hnabla u-\bq\right| \le C\e a\quad \mbox{in }B_{1/2},
	\]
	which gives
	\[
	\left|\Hnabla u\right| \ge \left|\bq\right|-C\e a\quad \mbox{in }B_{1/2}.
	\]
	Finally, by \eqref{bound-q}, we get
	\[\left|\Hnabla u\right| \ge \frac{a}{4}-C\e a>0\quad  \mbox{in }B_{1/2},\]
	as  long as $\e$  is small enough.
	
	Furthermore, we can conclude that
	\begin{equation}\label{rmk-coroll-assurdo}
		u>0\quad {\mbox{in }} B_{1/2}.\end{equation}
	To check this, we suppose by contradiction that
	there exists a point $x_0\in B_{1/2}$ such that $u(x_0)=0$ (by assumption $u\ge0$).  Therefore, since $u\in C^{1,\gamma}(B_{1/2})$, we know that $\Hnabla u(x_0)=0$,  which contradicts \eqref{rmk-coroll-hgrad-not-zero}, and so \eqref{rmk-coroll-assurdo} holds.
\end{remark}

In order to prove Corollary \ref{corollary-lemma-second-alternative-dichotomy-improved}, we recall the notion of Morrey-Campanato spaces in the setting of Carnot Groups, see \cite{MS79}.

\begin{defin}[Morrey-Campanato spaces]\label{defin-Campanato-spaces}
	Let $\Omega \subseteq \G$. For every $1\le p<+\infty$ and $\lambda \in (0,+\infty)$,  a horizontal section $f=(f_1,\ldots,f_{m_1})\in L^p_{\mathrm{loc}}(\Omega,H\G)$  is said to be in the Morrey-Campanato space $\mathcal{E}^{\lambda,\,p}(\Omega,H\G)$ if
	$$[f]_{\mathcal{E}^{\lambda,\,p}(\Omega,H\G)}:=\sup_{B\subset\Omega}\left(\frac1{|B|^{1+p\lambda}}\int_B|f(y)-\mathbf{f_B}(y)|^p\,dy\right)^{1/p}<+\infty,$$
	where the supremum is taken over all metric balls $B\subset \Omega$ such that $\overline{B}\subset \Omega$ and $\mathbf{f_B}$ is the constant horizontal section given by
	$$\mathbf{f_B}(y):=\sum_{i=1}^{m_1}\left(\fint_Bf_i(z)\,dz\right)X_i(y),\quad \text{for } y\in\Omega.$$
\end{defin}

\begin{remark}
	For every $u=(u_1,\ldots,u_{m_1})\in \mathcal{E}^{\lambda,p}(\Omega,H\G)$, the quantity $[u]_{\mathcal{E}^{p,\lambda}(\Omega,H\G)}$ is a seminorm in $\mathcal{E}^{\lambda,p}(\Omega,H\G)$ and \begin{equation}\label{equivalent-Campanato-seminorm}
		{\mbox{it is equivalent to the quantity }}\quad
		\sup_{B\subset\Omega}\left(\frac1{|B|^{1+p\lambda}}\inf_{\boldsymbol{\xi}}\int_{B}\left|u(x)-\bxi(x)\right|^p\,dx\right)^{1/p},
	\end{equation}
	where the supremum is taken over all metric balls $B$ such that $\overline{B}\subset \Omega$ and the infimum is taken over all the constant horizontal sections such that 
	\begin{equation}\label{b-xi}
		\bxi(x):=\sum_{i=1}^{m_1}\xi_iX_i(x), \quad \text{for }x\in\G \text{ and some }\xi:=(\xi_1,\ldots,\xi_{m_1})\in\R^{m_1}.
	\end{equation}    
	To prove \eqref{equivalent-Campanato-seminorm}, we observe that for every ball $B\subset \Omega$ such that $\overline{B}\subset \Omega$ and constant horizontal section $\bxi$ as in \eqref{b-xi}, by applying  Jensen's Inequality, we have 
	\begin{align*}
		\left|\mathbf{u_B}- \boldsymbol{\xi} \right|^p &\leq\left |\sum_{i=1}^{m_1}\left(\fint_B |u_i(y)-\xi_i|\,dy\right)^2\right |^{\frac p 2}\leq \left | \sum_{i=1}^{m_1} \fint_B|u_i(y)-\xi_i|\,dy \right |^{p}\\
		&\leq C\left(\fint_B \left|u(y)-\boldsymbol{\xi}(y)\right|\,dy\right)^{p}\leq C\fint_B \left|u(y)-\bxi(y)\right|^p\,dy,
	\end{align*}  
	for some constant $C>0$  only depending on $p$ and $\G$.  Consequently, up to renaming $C>0$, it occurs, 
    \begin{align*} 
		\int_B\left | u(y)-\mathbf{u_B}(y) \right |^p\,dy&\leq 2^p\int_B \left | u(y)-\boldsymbol{\xi}(y)\right |^p\,dy\,+\,2^p|B|\left | \mathbf{u_B}-\boldsymbol{\xi} \right |^p\\
		&\leq C \int_B\left | u(y)-\boldsymbol{\xi}(x) \right |^p\,dy
	\end{align*}
        for every constant horizontal section $\boldsymbol{\xi}$ as in \eqref{b-xi} and for every ball $B\subset\Omega$ such that $\overline{B}\subset \Omega$, which leads to $$\int_B \left | u(y)-\mathbf{u_B}(y) \right|^p\, dy\leq C\inf_{\boldsymbol{\xi}}\int_B\left | u(y)-\boldsymbol{\xi}(y) \right |^p\,dy.$$
        On the other hand, for every ball $B\subset\Omega$ such that $\overline{B}\subset \Omega$, it is clear that $$\int_B \left | u(y)-\mathbf{u_B}(y) \right |^p\, dy\geq\inf_{\boldsymbol{\xi}}\int_B\left | u(y)-\boldsymbol{\xi}(y) \right |^p\,dy.$$\end{remark}

Now, we  recall a result, proved in \cite{MS79}, which we will use in the proof  of Corollary \ref{corollary-lemma-second-alternative-dichotomy-improved}.

\begin{thm}\emph{(\cite{MS79}*{Theorem 5})}\label{theo-isomorph-Campanato-spaces-Holder-spaces}
	For every metric ball $B \subset \G$, $\lambda\in (0,1)$, and $p\in[1,+\infty)$, one has $$\mathcal{E}^{\lambda,p}(B,H\G)=C^{0,\lambda}(B, H\G).$$
	More precisely, it results that a function $\phi$ belongs to $\mathcal{E}^{\lambda,p}(B)$ if and only if $\phi$ is equal, $\mathcal{L}^n$-almost everywhere, to a function $\psi$ in the H\"older class $C^{0,\lambda}(B)$. Moreover, the seminorms $[\phi]_{\mathcal{E}^{\lambda,p}(B,H\G)}$ and $[\psi]_{C^{0,\lambda}(B,H\G)}$ are equivalent.
\end{thm}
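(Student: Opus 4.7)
The plan is to deduce the statement from the abstract Macias--Segovia theorem of \cite{MS79}, noting that $(\G, d_c, \mathcal{L}^n)$ is a space of homogeneous type: $d_c$ is a genuine metric on $\G$ and the doubling property $|B_{2r}(x)|=2^Q|B_r(x)|$ holds with constant depending only on $Q$. Since the horizontal-section norm is equivalent to the $\ell^\infty$-norm of the scalar components with respect to the moving frame $X_1,\ldots,X_{m_1}$, the claim reduces component by component to the corresponding scalar statement, which is proved by establishing the two inclusions separately.

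The easy inclusion $C^{0,\lambda}(\Omega)\subseteq\mathcal{E}^{\lambda,p}(\Omega)$ is obtained as follows. For $\psi\in C^{0,\lambda}(\Omega)$ and $B=B_r(x_0)\subset\Omega$, the pointwise bound $|\psi(x)-\psi(y)|\le[\psi]_{C^{0,\lambda}}(2r)^\lambda$ for $x,y\in B$ together with Jensen's inequality yields
\[
\int_B|\psi(x)-\psi_B|^p\,dx\le\int_B\fint_B|\psi(x)-\psi(y)|^p\,dy\,dx\le [\psi]_{C^{0,\lambda}}^p (2r)^{p\lambda}|B|,
\]
which after converting $r^{p\lambda}$ into the corresponding power of $|B|$ (using $|B_r|=r^Q|B_1|$, in the normalization of \cite{MS79}) gives $[\psi]_{\mathcal{E}^{\lambda,p}}\lesssim[\psi]_{C^{0,\lambda}}$.

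The reverse inclusion is the substantive direction and rests on a telescoping estimate: for concentric balls $B_r(x_0)\subset B_R(x_0)\subset\Omega$ one has
\[
\left|\phi_{B_r(x_0)}-\phi_{B_R(x_0)}\right|\le C[\phi]_{\mathcal{E}^{\lambda,p}} R^{\sigma}
\]
for an appropriate exponent $\sigma>0$ proportional to $\lambda$. This is proved by iterating along a dyadic chain $r_k=2^k r$ up to the scale $R$; at each step H\"older's inequality and the doubling property give
\[
\left|\phi_{B_{r_k}}-\phi_{B_{r_{k+1}}}\right|\le\Bigl(\fint_{B_{r_{k+1}}}|\phi-\phi_{B_{r_{k+1}}}|^p\,dy\Bigr)^{1/p}\Bigl(\frac{|B_{r_{k+1}}|}{|B_{r_k}|}\Bigr)^{1/p}\le C[\phi]_{\mathcal{E}^{\lambda,p}}r_{k+1}^{\sigma},
\]
and the resulting geometric series sums to the claim. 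The telescoping estimate shows that $\{\phi_{B_r(x_0)}\}_{r\to 0}$ is Cauchy, so one defines $\tilde\phi(x_0):=\lim_{r\to 0^+}\phi_{B_r(x_0)}$ pointwise; by the Lebesgue differentiation theorem (available in this doubling setting), $\tilde\phi=\phi$ $\mathcal{L}^n$-almost everywhere on $\Omega$.

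To conclude the H\"older bound, for $x,y\in\Omega$ with $d=d_c(x,y)$ small enough, I pick $B^\ast=B_{3d}(x)$, which contains both $x$ and $y$, and split
\[
|\tilde\phi(x)-\tilde\phi(y)|\le |\tilde\phi(x)-\phi_{B^\ast}|+|\phi_{B^\ast}-\tilde\phi(y)|.
\]
Each term is bounded by applying the telescoping estimate along the chain from a vanishing ball centred at $x$ (respectively $y$) up to $B^\ast$, producing $|\tilde\phi(x)-\tilde\phi(y)|\le C[\phi]_{\mathcal{E}^{\lambda,p}}d^\lambda$ with a constant independent of $x$ and $y$. The main obstacle is the telescoping estimate together with the correct identification of the exponent $\sigma$ matching the Campanato normalization $|B|^{1+p\lambda}$ with the H\"older exponent $\lambda$ relative to $d_c$: keeping track of the homogeneous dimension $Q$ in the doubling constant requires care, but the renormalization of the quasi-distance adopted in \cite{MS79} handles this uniformly.
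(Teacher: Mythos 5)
The paper does not prove this statement; it is quoted verbatim from \cite{MS79} (Theorem 5) and used as a black box in Step~4 of the proof of Corollary~\ref{corollary-lemma-second-alternative-dichotomy-improved}. Your proposal is a correct outline of the standard Campanato--Meyers chaining argument in a space of homogeneous type, and it is essentially the proof strategy that \cite{MS79} itself employs, so on the mathematics there is nothing to object to.

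Two points worth keeping in mind when turning the sketch into a full proof. First, the exponent bookkeeping: with the paper's normalization $[f]_{\mathcal{E}^{\lambda,p}}=\sup_B\big(|B|^{-(1+p\lambda)}\int_B|f-f_B|^p\big)^{1/p}$ and $|B_r|\sim r^Q$, the telescoping/chaining argument produces a H\"older modulus proportional to $|B|^{\lambda}\sim r^{Q\lambda}$, so the natural output is H\"older continuity of exponent $Q\lambda$ with respect to $d_c$ (equivalently, exponent $\lambda$ with respect to the measure-normalized quasi-distance that \cite{MS79} work with). The theorem as stated in the paper, with the same $\lambda$ on both sides, is consistent with the \cite{MS79} normalization, but one must be explicit about which distance the H\"older class is measured against; you gesture at this but don't pin it down, and it is precisely the place where the identification $\gamma=\lambda=\alpha/Q$ in the Corollary's proof hangs together. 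Second, the ``hard'' inclusion $\mathcal{E}^{\lambda,p}(\Omega)\subseteq C^{0,\lambda}(\Omega)$ for an arbitrary open $\Omega\subseteq\G$ is false without some mild regularity of $\Omega$ (one needs every pair $x,y\in\Omega$ at small distance $d$ to admit a comparison ball of radius $\approx d$ contained in $\Omega$, or an interior cone/chain condition). Your choice $B^\ast=B_{3d}(x)$ may leave $\Omega$ near the boundary. This is harmless for the paper's purposes because the theorem is only ever applied on metric balls, but the blanket statement ``for every $\Omega\subseteq\G$'' in the theorem and in your proof should be read with that caveat, or restricted to $\Omega$ a ball.
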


We also point out the following scaling property
of almost minimizers.

\begin{lem}\label{remark-rescaling-almost minimizer-1}
	Let $u$ be an almost minimizer for $J$ in $B_1$ with constant $\kappa$ and exponent $\beta$. 
	For any $r\in(0,1)$, let
	\begin{equation}\label{defin-rescaling}
		u_r(x):=\frac{u(\delta_r(x))}{r}.
	\end{equation} 
	Then, $u_r$ is			
	an almost minimizer for $J$ in $B_{1/r}$ with constant $\kappa r^\beta$ and exponent $\beta$, namely,
	\begin{equation}\label{THdefin-rescaling} J(u_r,B_\varrho(x_0))\leq(1+\kappa r^\beta \varrho^\beta)J(v,B_\varrho(x_0)),
	\end{equation}
	for every ball $B_\varrho(x_0)$ such that $\overline{B_\varrho(x_0)}\subset B_{1/r}$  and $v\in HW^{1,2}(B_\varrho(x_0))$ such that $u_r-v \in HW^{1,2}_0(B_\varrho(x_0))$.
\end{lem}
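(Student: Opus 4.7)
The plan is to reduce the almost minimality inequality \eqref{THdefin-rescaling} for $u_r$ on a ball $B_\varrho(x_0)\subset B_{1/r}$ to the defining inequality \eqref{inequality-of-J-p-u-alm-min} for $u$ on the dilated ball $B_{r\varrho}(\delta_r(x_0))\subset B_1$, by pushing every competitor $v$ of $u_r$ forward via $\delta_r$ to a competitor of $u$. The key inputs are three scaling identities that follow directly from the homogeneity properties of the group dilations collected in Lemma \ref{DilationsProperties} and \eqref{gauge2}: first, $\delta_r(B_\varrho(x_0))=B_{r\varrho}(\delta_r(x_0))$, since $B_\varrho(x_0)=\tau_{x_0}B_\varrho$ and $\delta_r$ commutes with left translation up to dilation of the translating element; second, for any $j\in\{1,\dots,m_1\}$ the horizontal vector field $X_j$ is homogeneous of degree one, so for a function $w$ we have $X_j(w\circ\delta_r)(x)=r\,(X_jw)(\delta_r(x))$, which yields $|\Hnabla u_r(x)|^2=|\Hnabla u(\delta_r(x))|^2$; third, Lebesgue measure is $Q$-homogeneous, so the change of variables $y=\delta_r(x)$ gives $dy=r^Q\,dx$.

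First, I compute $J(u_r,B_\varrho(x_0))$. Using the gradient identity above together with the trivial fact $\chi_{\{u_r>0\}}(x)=\chi_{\{u>0\}}(\delta_r(x))$, and then the substitution $y=\delta_r(x)$, I obtain
\begin{equation*}
J(u_r,B_\varrho(x_0))=\int_{B_\varrho(x_0)}\bigl(|\Hnabla u(\delta_r(x))|^2+\chi_{\{u>0\}}(\delta_r(x))\bigr)\,dx=r^{-Q}\,J(u,B_{r\varrho}(\delta_r(x_0))).
\end{equation*}
Second, given any competitor $v\in HW^{1,2}(B_\varrho(x_0))$ with $u_r-v\in HW^{1,2}_0(B_\varrho(x_0))$, I define the rescaled competitor
\begin{equation*}
\widetilde{v}(y):=r\,v(\delta_{1/r}(y)),\qquad y\in B_{r\varrho}(\delta_r(x_0)).
\end{equation*}
By the same scaling identity for $X_j$ (applied with $r$ replaced by $1/r$), the factor $r$ cancels the factor $1/r$, giving $\Hnabla\widetilde{v}(y)=\Hnabla v(\delta_{1/r}(y))$, while $\{\widetilde{v}>0\}=\delta_r(\{v>0\})$. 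An entirely analogous change of variables then produces the reciprocal scaling relation
\begin{equation*}
J(\widetilde{v},B_{r\varrho}(\delta_r(x_0)))=r^{Q}\,J(v,B_\varrho(x_0)).
\end{equation*}
Moreover, since $u(y)-\widetilde{v}(y)=r\bigl((u_r-v)\circ\delta_{1/r}\bigr)(y)$ and the composition with $\delta_{1/r}$ is a bi-Lipschitz homeomorphism of $B_{r\varrho}(\delta_r(x_0))$ onto $B_\varrho(x_0)$ which preserves $HW^{1,2}_0$, the zero-trace hypothesis $u_r-v\in HW^{1,2}_0(B_\varrho(x_0))$ transfers to $u-\widetilde{v}\in HW^{1,2}_0(B_{r\varrho}(\delta_r(x_0)))$.

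Finally, the inclusion $\overline{B_\varrho(x_0)}\subset B_{1/r}$ dilates to $\overline{B_{r\varrho}(\delta_r(x_0))}\subset B_1$, so $\widetilde{v}$ is an admissible competitor for $u$ in Definition \ref{def-AM}. Applying \eqref{inequality-of-J-p-u-alm-min} to the pair $(u,\widetilde{v})$ on $B_{r\varrho}(\delta_r(x_0))$ and combining with the two scaling identities above yields
\begin{equation*}
J(u_r,B_\varrho(x_0))=r^{-Q}J(u,B_{r\varrho}(\delta_r(x_0)))\leq r^{-Q}(1+\kappa(r\varrho)^\beta)J(\widetilde{v},B_{r\varrho}(\delta_r(x_0)))=(1+\kappa r^\beta\varrho^\beta)J(v,B_\varrho(x_0)),
\end{equation*}
which is exactly \eqref{THdefin-rescaling}. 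There is no genuine obstacle here; the only point requiring care is bookkeeping the exponents of $r$ in the three scalings (homogeneous dimension for volumes, degree one for horizontal derivatives, degree $\beta$ for the perturbative term $\varrho^\beta$), together with the verification that the map $v\mapsto\widetilde{v}$ is a bijection between admissible competitor classes on the two concentric balls.
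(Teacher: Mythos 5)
Your proof is correct and follows essentially the same strategy as the paper's: you push the competitor $v$ for $u_r$ on $B_\varrho(x_0)$ through the dilation to obtain the competitor $\widetilde{v}(y)=r\,v(\delta_{1/r}(y))$ for $u$ on $B_{r\varrho}(\delta_r(x_0))$ (the paper calls this function $w$), invoke the almost-minimality of $u$ there with perturbation constant $1+\kappa(r\varrho)^\beta$, and undo the change of variables using the $Q$-homogeneity of Lebesgue measure and the degree-one homogeneity of horizontal derivatives. The bookkeeping of exponents and the verification that the competitor class is preserved are handled the same way; the only difference is notational.
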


\begin{proof} 
	 Given $x_0\in B_{1/r}$, we take $\varrho$ and $v$ as in the statement of Lemma \ref{remark-rescaling-almost minimizer-1} and we define
	\begin{equation}\label{defin-v_r}
		w(x):= rv\left(\delta_{1/r}(x)\right).
	\end{equation}	
	Then, with the notation $y_0:=\delta_r(x_0)$ and $\vartheta:=r\varrho$, for all $x\in\partial B_\vartheta(y_0)$, we  have $\delta_{1/r}(x)\in\partial B_\varrho(x_0)$ and therefore, 
	\begin{equation*}
		w(x)-u(x)=rv\left(\delta_{1/r}(x)\right)-u(x) = rv\left(\delta_{1/r}(x)\right)-ru_r\left(\delta_{1/r}(x)\right)=0.
	\end{equation*}
	Accordingly, we can  exploit $w$ as a competitor for $u$, obtaining, see Definition \ref{def-AM},
	\begin{equation}\label{00PPJnd2}
		\int_{B_{r\varrho}(y_0)}\Big(\left|\Hnabla u(y)\right|^2+\chi_{\left\{u>0\right\}}(y)\Big)\,dy
		\leq(1+\kappa r^\beta\varrho^\beta)
		\int_{B_{r\varrho}(y_0)}\Big(\left|\Hnabla w(y)\right|^2+\chi_{\left\{w>0\right\}}(y)\Big)\,dy.
	\end{equation}
    Furthermore, using, consistently with \eqref{defin-rescaling}, the notation $w_r(x):=\frac{w(\delta_r(x))}{r}$, with the change of variable $x:=\delta_{1/r}(y)$,  we get,  recalling that $y_0:=\delta_r(x_0)$,
	\begin{equation*} \begin{split}&
			\int_{B_{r\varrho}(y_0)}\Big(\left|\Hnabla w(y)\right|^2+\chi_{\left\{w>0\right\}}(y)\Big)\,dy= {r^Q}
			\int_{B_{\varrho}(x_0)}\Big(\left|\Hnabla w(\delta_r(x))\right|^2+\chi_{\left\{w>0\right\}}(\delta_r(x))\Big)\,dx\\
			&\qquad \qquad=r^Q\int_{B_{\varrho}(x_0)}\Big(\left|\Hnabla w_r(x)\right|^2+\chi_{\left\{w_r>0\right\}}(x)\Big)\,dx,
	\end{split}\end{equation*}
	and a similar identity holds true  replacing $w$ and $w_r$ with $u$ and $u_r$. Plugging this information into \eqref{00PPJnd2}, we reach the desired result in \eqref{THdefin-rescaling} by observing that $v=w_r$ from \eqref{defin-v_r}.\end{proof}

\begin{proof}[Proof of Corollary \ref{corollary-lemma-second-alternative-dichotomy-improved}]
	Thanks to Lemma \ref{remark-rescaling-almost minimizer-1}, without loss of generality  we can suppose that
	\begin{equation}\label{riscalamento}
		{\mbox{$u$ is an almost minimizer for $J$ in $B_2$ (with constant $\kappa$ and exponent $\beta$).}}
	\end{equation}
	We divide  the proof into separate steps.
	
	\medskip
	\noindent{\em Step 1: iteration of Lemma \ref{lemma-second-alternative-dichotomy-improved}.}
	We prove that we can iterate Lemma \ref{lemma-second-alternative-dichotomy-improved} indefinitely
	with $\alpha:=\beta/2$. More precisely, we show that, for all $ k\ge 0$, there exists a constant horizontal section $\bq_k:\G \to H\G$, \begin{equation}\label{def-qk}
		\bq_k(x):=\sum_{j=1}^{m_1}q_{k,j}X_j(x), \qquad \text{for some }\ q_k=:(q_{k,1},\ldots,q_{k,m_1})\in\R^{m_1},
	\end{equation}
	such that
	\begin{equation}\label{bounds-qk}\begin{split}&
			|\bq_k|\in\left[ \frac{a}4-\frac{\widetilde C\e a\left(1-\rho^{k\alpha}\right)}{1-\rho^{\alpha}}, \,C_0a+\frac{\widetilde C\e a\left(1-\rho^{k\alpha}\right)}{1-\rho^{\alpha}}\right]
			\quad \text{for all }x\in\G,\\&
			\left(\fint_{B_{\rho^k}}\left| \Hnabla u(x)-\bq_{k}(x)\right|^2\,dx\right)^{1/2}\le\rho^{{k\alpha}}\e a,\\{\mbox{and }}\quad&
			\left(\fint_{B_{\rho^k}}\left|\Hnabla  u(x)\right|^2\,dx\right)^{1/2}\in
			\left[ \frac{|\bq|}2,2|\bq|\right],
		\end{split}
	\end{equation} 
	where $\rho\in(0,1)$, $C_0>0$, and $\widetilde C>0$ are universal constants.
	
	 We prove it by induction. When $k=0$, we  choose $\bq_0:=\bq$. Then, the desired claims follow from \eqref{bound-q} and \eqref{second-alternative-dichotomy}.
	
	Now, we assume that \eqref{bounds-qk} holds for $k$ and we  show it is true for $k+1$. Setting $r:=\rho^k$  and $u_r(\cdot):=\frac{u(\delta_r(\cdot))}{r}$, by  the inductive assumption we have
	$$ \left(\fint_{B_{1}}\left| \Hnabla u_r(x)-\bq_{k}(x)\right|^2\,dx\right)^{1/2}\le r^{\alpha}\e a=\e_k a,$$
	with $\e_k:=r^{\alpha}\e=\rho^{k\alpha}\e$. Notice that the inductive assumption also yields \eqref{estimate-norm-q}, as long as $\e$ is chosen conveniently small. Therefore,  in view of Lemma \ref{remark-rescaling-almost minimizer-1}, we  can apply Lemma \ref{lemma-second-alternative-dichotomy-improved} on the function $u_r$ with $\sigma:=\kappa r^\beta$. We  remark that the  condition $\sigma\le c_0\e^2$ in Lemma \ref{lemma-second-alternative-dichotomy-improved} translates here into $\kappa\le c_0\e^2$, which  is the requirement in the statement of Corollary \ref{corollary-lemma-second-alternative-dichotomy-improved} (by taking $\kappa_0$ there  smaller or equal to $c_0$). Hence, we get from \eqref{tesi-lemma-improv} and \eqref{so3cer5b56859tj45ivnt45-2}
	that there exists a constant horizontal section $\bq_{k+1}$ such that
	\begin{equation}\label{uso-lemma-imp}\begin{split}
			\left(\fint_{B_\rho}\left| \Hnabla u_r(x)-\bq_{k+1}(x)\right|^2\,dx\right)^{1/2}\le\rho^{\alpha}\e_k a\qquad{\mbox{and}}\qquad
			\left|\bq_k-{\bq}_{k+1}\right|\le \widetilde{C}\e_k a.	
	\end{split}\end{equation}
	 Since $\Hnabla u_r=\Hnabla u(\delta_r(x))$, scaling back, we  find, being $\e_k=\rho^{k\alpha}\e$,
	\begin{equation}\label{riscrit-integr-q_k+1}
		\left(\fint_{B_{\rho^{k+1}}}\left| \Hnabla u(x)-\bq_{k+1}(x)\right|^2\,dx\right)^{1/2}\le\rho^\alpha\rho^{{k\alpha}}\e a=
	\rho^{{(k+1)\alpha}}\e a.
	\end{equation}
	Moreover, using again the inductive assumption and \eqref{uso-lemma-imp},
	\begin{eqnarray*} |\bq_{k+1}|&\le& |\bq_k|+|\bq_k-\bq_{k+1}| \le C_0a+\frac{\widetilde C\e a\left(1-\rho^{k\alpha}\right)}{1-\rho^{\alpha}}+\widetilde{C}\rho^{k\alpha}\e a\\
	&=&C_0a+\frac{\widetilde C\e a\left(1-\rho^{k\alpha}\right)+\widetilde{C}\rho^{k\alpha}\e a(1-\rho^{\alpha})}{1-\rho^{\alpha}}=C_0a+\frac{\widetilde C\e a\left(1-\rho^{(k+1)\alpha}\right)}{1-\rho^{\alpha}}
	\end{eqnarray*}
	and, similarly,
	\begin{eqnarray*} |\bq_{k+1}| &\ge& |\bq_k|-|\bq_k-\bq_{k+1}|\ge \frac{a}4-\frac{\widetilde C\e a\left(1-\rho^{k\alpha}\right)}{1-\rho^{\alpha}}-
		\widetilde{C}\rho^{k\alpha}\e a\\
        &=&\frac{a}{4}-\frac{\widetilde C\e a\left(1-\rho^{k\alpha}\right)+\widetilde{C}\rho^{k\alpha}\e a(1-\rho^{\alpha})}{1-\rho^{\alpha}}=\frac{a}{4}-\frac{\widetilde C\e a\left(1-\rho^{(k+1)\alpha}\right)}{1-\rho^{\alpha}}.\end{eqnarray*}
	 Lastly, from \eqref{riscrit-integr-q_k+1}, we also have
	\begin{eqnarray*}&&
		\left|\left(\fint_{B_{\rho^{k+1}}}\left|\Hnabla u(x)\right|^2\,dx\right)^{1/2}-|\bq_{k+1}|\right|\\
        &&\qquad=
		\frac{1}{|B_{\rho^{k+1}}|^{1/2}}
		\left|\left(\int_{B_{\rho^{k+1}}}\left|\Hnabla u(x)\right|^2\,dx\right)^{1/2}-|\bq_{k+1}||B_{\rho^{k+1}}|^{1/2}\right|\\&&\qquad=
		\frac{1}{|B_{\rho^{k+1}}|^{1/2}}
		\left| \left\|\Hnabla u\right\|_{L^2(B_{\rho^{k+1}})}-\left\|\bq_{k+1}\right\|_{L^2(B_{\rho^{k+1}})}\right|\le
		\frac{1}{|B_{\rho^{k+1}}|^{1/2}}
		\left\|\Hnabla u-\bq_{k+1}\right\|_{L^2(B_{\rho^{k+1}})}\\
		&&\qquad=\left(\fint_{B_{\rho^{k+1}}}\left|\Hnabla u(x)-\bq_{k+1}(x)\right|^2\,dx\right)^{1/2}\le\e a,
	\end{eqnarray*}
	which  yields, recalling that $\bq_0=\bq$ and \eqref{bound-q},
	\begin{eqnarray*}&&
		\left|\left(\fint_{B_{\rho^{k+1}}}\left|\Hnabla u(x)\right|^2\,dx\right)^{1/2}-|\bq|\right|=
		\left|\left(\fint_{B_{\rho^{k+1}}}\left|\Hnabla u(x)\right|^2\,dx\right)^{1/2}-|\bq_0|\right|\\&&\qquad\le
		\left|\left(\fint_{B_{\rho^{k+1}}}\left|\Hnabla u(x)\right|^2\,dx\right)^{1/2}-|\bq_{k+1}|\right|+\sum_{j=0}^k\left||\bq_{j+1}|-|\bq_j|\right|\\&&\qquad\le
		\e a+\widetilde{C}a\sum_{j=0}^k\varepsilon_j
		\le \e a+\widetilde{C}\varepsilon a\sum_{j=0}^{+\infty}\rho^{j\alpha}=
		\left(1+\frac{\widetilde{C}}{1-\rho^\alpha}\right)\e a\\&&\qquad\le
		\left(1+\frac{\widetilde{C}}{1-\rho^\alpha}\right)\e |\bq|\le\frac{|\bq|}2.
        \end{eqnarray*}
        These observations conclude the proof of the inductive step and establish \eqref{bounds-qk}.	\medskip

	\noindent{\em Step 2: Morrey-Campanato estimates.}
	We now want to exploit the Morrey-Campanato  estimates of Theorem \ref{theo-isomorph-Campanato-spaces-Holder-spaces}, here applied to the function $\Hnabla u-\bq$, with the following choices 
	\begin{equation}\label{choices-Campanato}
		p=2,\quad B=B_{1/2},\quad \text{and} \quad \lambda=\alpha/Q.
	\end{equation}  To this end, we claim that, for every $B\subset B_{1/2}$ such that $\overline{B}\subset B_{1/2}$, 	
    \begin{equation}\label{primo-claim-campanato}
		\frac{1}{|B|^{1+2\lambda}}\inf_{\boldsymbol{\xi}}\int_{B}\left| \Hnabla u(x)-\bq(x)-\boldsymbol{\xi}(x)\right|^2\,dx\,\le C\e^2 a^2,
	\end{equation}
	where the infimum is taken over all the constant horizontal sections $\bxi:\G\to H\G$ as in \eqref{b-xi}   and $C>0$ is a positive constant. In what follows, $C$ may change from line to line. To prove  it, we distinguish two cases, either $|B|\ge1$ or $|B|\in(0,1)$.\\ If $|B|\ge1$, we use \eqref{second-alternative-dichotomy}
	 to get
	\begin{eqnarray*}&&
		|B|^{-(1+2\lambda)}\inf_{\bxi}\int_{B}\left| \Hnabla u(x)-\bq(x)-\bxi(x)\right|^2\,dx\le
		\int_{B_{1/2}}\left| \Hnabla u(x)-\bq(x)\right|^2\,dx\le |B_1|\,\e^2 a^2,
	\end{eqnarray*}
	which gives  \eqref{primo-claim-campanato}.\\
	If instead $|B|\in(0,1)$, let  $k_0=k_0(B)\in\N$ be such that $B\subseteq B_{\rho^{k_0}}$ and $ B_{\rho^{k}}\subseteq B$ for all $k>k_0$. Then, by \eqref{bounds-qk} we  have, recalling \eqref{choices-Campanato}, choosing $\bxi=\bq_{k_0}-\bq$,
	\begin{eqnarray*}&&
		|B|^{-(1+2\lambda)}\inf_{\bxi}\int_{B}\left|\Hnabla u(x)-\bq(x)-\bxi(x)\right|^2\,dx\le
		|B_{\rho^{k_0+1}}|^{-(1+2\lambda)}\int_{B_{\rho^{k_0}}}\left|\Hnabla u(x)-\bq_{k_0}(x)\right|^2\,dx\\&&\qquad\qquad\qquad\qquad=
		|B_1|\,\rho^{-Q(k_0+1)(1+2\lambda)}\,|B_{\rho^{k_0}}|
		\fint_{B_{\rho^{k_0}}}\left|\Hnabla u(x)-\bq_{k_0}(x)\right|^2\,dx\\&&\qquad\qquad\qquad\qquad
		\leq C|B_1|\,\rho^{-Q(k_0+1)(1+2\lambda)+Qk_0+2k_0\alpha }\,\e^2 a^2= C|B_1|\,\rho^{-Q-2\alpha}\,\e^2 a^2,
	\end{eqnarray*}
	which  implies \eqref{primo-claim-campanato}.\medskip
	
	\noindent{\em Step 4: conclusion of the proof.}
	Since, by \eqref{second-alternative-dichotomy},
	$$ \left\|\Hnabla u-\bq\right\|_{L^2(B_{1/2})}\le C\e a,$$
	and by \eqref{equivalent-Campanato-seminorm} and \eqref{primo-claim-campanato}, we have
	$$ [\Hnabla u-\bq]_{\mathcal{E}^{2,\lambda}(B_{1/2},H\G)}\le C\e a,$$
	 we can apply  Theorem \ref{theo-isomorph-Campanato-spaces-Holder-spaces}, from which it follows that
	\begin{equation}\label{stiama-holder-hgradu-q}
		\left[\Hnabla u-\bq\right]_{C^{0,\lambda}(B_{1/2},H\G)}\le
		C\e a,\end{equation}  with $\lambda=\alpha/Q\in(0,1)$.
	
	Now, we define the $\G$-affine function $\ell:\G\to \R$ as $\ell(x):=u(e)+\left \langle \bq(x),\pi_x(x) \right \rangle$. First, by Proposition \ref{campi-omogenei0}, we have $\Hnabla\ell=\bq$.  Also, for all $x\in B_{1/2}$, let $\delta_x:=  d_c(0,x)$ and $\gamma_x\colon[0,\delta_x]\to B_{1/2}$ a sub-unitary curve  such that
	\[
	\gamma_x(0)=e,\quad \gamma_x (\delta_x)=x,\quad \text{ and }\quad \dot \gamma_x(t)=\sum_{i=1}^{m_1}h_i(t)X_i(\gamma_x(t))\quad \text{ for a.e.\ $t\in [0,\delta_x]$},
	\]
	with $\sum_{j=1}^{m_1}(h_j(t))^2\leq 1$  for a.e.\ $t\in [0,\delta_x]$ (such a $\gamma$ exists thanks to Chow's Theorem, \cite{BLU}*{Theorem 19.1.3}).  Thus, we can write, according to \eqref{stiama-holder-hgradu-q},
	\begin{align*}
		|u(x)- \ell(x)|&=\left |u(x)-u(e)-\left \langle \bq(x),\pi_x(x)\right\rangle-\left\langle\bq(e),\pi_e(e) \right \rangle\right|\\
		&=\left|\int_0^{\delta_x} \frac{d}{dt}\left(u(\gamma_x(t))-\left \langle \bq(\gamma_x(t)),\pi_{\gamma_x(t)}(\gamma_x(t)) \right \rangle\right)\,dt\right | \\
		&\leq C\int_0^{\delta_x}|\nabla_\G u(\gamma_x(t))-\bq(\gamma_x(t))|\,dt \leq C\e a,
	\end{align*}
	which yields, by arbitrariness of $x\in B_{1/2}$, $$\|u-\ell\|_{L^\infty(B_{1/2})}\le C\e a.$$ This and \eqref{stiama-holder-hgradu-q} establish \eqref{first-conclusion-corollary}.
\end{proof}

\section{Lipschitz continuity of almost minimizers and proof
	of Theorem \ref{theor-Lipsch-contin-alm-minim}}\label{sec:lip1}

 We now provide the proof of Theorem \ref{theor-Lipsch-contin-alm-minim}. 

\begin{proof}[Proof of Theorem \ref{theor-Lipsch-contin-alm-minim}]
	 By virtue of Lemma \ref{remark-rescaling-almost minimizer-1}, up  to rescaling,
	we can assume that $u$ is an almost minimizer with constant \begin{equation}\label{1-e2LS}
		\widetilde{\kappa}:=\kappa s^{\beta},\end{equation} which can be made  as small as we wish by  a suitable choice of $s>0$.
	
	Let $\alpha_0\in(0,1]$ be the structural constant  in Lemma \ref{lemma-second-alternative-dichotomy-improved} and define
	$$ \alpha:=\frac12\min\left\{\alpha_0,\frac\beta{2}\right\}.$$
	We also consider $\e_0$  of Proposition \ref{dic} and  fix $\eta\in(0,1)$ and $M\ge1$ as in Proposition \ref{dic} (corresponding here to  the choice $\e:=\e_0/2$).
	Let us  denote by
	\begin{equation}\label{def-a-tau}
		a(\tau):=\bigg(\fint_{B_{\tau}}\left| \Hnabla   u(x)\right|^2\,dx\bigg)^{1/2}.
	\end{equation}
	We  divide the argument into separate steps.			\medskip
	
	\noindent{\em Step 1: estimating the average.}
	We claim that
	\begin{equation}\label{ineq-a(r)-r<1}
		a(r)\le C(M,\eta)(1+a(1))\quad  \mbox{for every } r\in(0,\eta],
	\end{equation}
	for some $C(M,\eta)>0$, possibly depending on $Q$.

	 To prove it, let us consider the set ${\mathcal{K}}\subseteq\N=\{0,1,2,\dots\}$ containing  all $k$'s for which
	\begin{equation}\label{ineq-a(eta^k)}
		a(\eta^k)\le C(\eta)M+2^{-k}a(1),
	\end{equation}
	 with
	\begin{equation}\label{def-C-eta}
		C(\eta):= 2\eta^{-Q/2}. \end{equation}
	We point out that, for $k=0$, \eqref{ineq-a(eta^k)} is obvious, hence
	\begin{equation}\label{k-not-empty}
		0\in{\mathcal{K}}\ne\varnothing.\end{equation}
	We then distinguish two cases, namely, whether \eqref{ineq-a(eta^k)} holds for every $k$ (that is, ${\mathcal{K}}=\N$) or not (i.e., ${\mathcal{K}}\subsetneqq\N$). 
	\medskip
	
	\noindent{\em Step 1.1: the case $\mathcal{K}=\N$}. For every $r\in(0,\eta],$ we  take $k_0\in\N={\mathcal{K}}$ such that $\eta^{k_0+1}<r\le \eta^{k_0}.$  Thus, according to \eqref{def-a-tau} and \eqref{ineq-a(eta^k)}, we  get
	\begin{equation*}\begin{split}
			&a(r)\le \bigg(\frac{1}{|B_1|\,\eta^{(k_0+1)Q} }\int_{B_{\eta^{k_0}}}\left|\Hnabla  u(x)\right|^2\,dx\bigg)^{1/2}=\eta^{-Q/2}a(\eta^{k_0})\le \eta^{-Q/2}(C(\eta)M+a(1))\\
			&\qquad\qquad\le \eta^{-Q/2}\max\left\{ C(\eta)M,1\right\}(1+a(1))\le C(M,\eta)(1+a(1)),
	\end{split}\end{equation*}
	 provided $C(M,\eta)
	\ge\eta^{-Q/2}\max\left\{ C(\eta)M,1\right\}$. The proof of \eqref{ineq-a(r)-r<1} is thereby complete in this case.	\medskip
	
	\noindent{\em Step 1.2: the case ${\mathcal{K}}\subsetneqq\N$.}			
	By \eqref{k-not-empty}, there exists $k_0\in\N$ such that $\{0,\dots,k_0\}\in{\mathcal{K}}$
	and
	\begin{equation}\label{prop-k0+1}
		k_0+1\not\in{\mathcal{K}}.\end{equation} We notice that this implies, recalling \eqref{def-a-tau} and  \eqref{def-C-eta},
	$$\eta^{-Q/2}M< C(\eta)M\le C(\eta)M+2^{-(k_0+1)}a(1)<
	a(\eta^{k_0+1})\le\eta^{-Q/2}a(\eta^{k_0}),$$
	and therefore
	\begin{equation}\label{DALS:iskcd0}a(\eta^{k_0})> M.
	\end{equation}
	Furthermore, \eqref{prop-k0+1}  also gives
	\begin{equation}\label{DALS:iskcd}
		a(\eta^{k_0+1})>C(\eta)M+2^{-(k_0+1)}a(1)\ge\frac{C(\eta)M+2^{-k_0}a(1)}{2}\ge \frac{a(\eta^{k_0})}2.
	\end{equation}
	 We claim that
	\begin{equation}\label{second-alternative-dichotomy-B_eta^k}
		\left(\fint_{B_{\eta^{k_0+1}}}\left| \Hnabla u(x)-\bq(x)\right|^2 \,dx\right)^{1/2}\le \e a(\eta^{k_0}),
	\end{equation}
	for some constant horizontal section $\bq:\G\to H\G$ such that $$\frac{a(\eta^{k_0})}{4}<\left|\bq\right|\le C_0a(\eta^{k_0}),$$  being $C_0$ the constant given by Proposition \ref{dic}.\\ To prove \eqref{second-alternative-dichotomy-B_eta^k}, we apply the  dichotomy of Proposition \ref{dic} rescaled in the ball $B_{\eta^{k_0}}$. Specifically, we obtain  from Proposition \ref{dic}, exploiting \eqref{alt1}, \eqref{alt2}, and \eqref{bound-q}, that \eqref{second-alternative-dichotomy-B_eta^k} holds true,  unless $a(\eta^{k_0+1})\le \frac{ a(\eta^{k_0}) }2,$ but this contradicts \eqref{DALS:iskcd}. This proves \eqref{second-alternative-dichotomy-B_eta^k}.
	
	Now, we  want to apply Corollary \ref{corollary-lemma-second-alternative-dichotomy-improved} rescaled in the ball $B_{\eta^{k_0+1}}$, namely, with $B_1$ replaced by $B_{\eta^{k_0+1}}$ and $a$  by $a(\eta^{k_0+1})$. 
	 For this purpose, we need to  check that the assumptions of Corollary \ref{corollary-lemma-second-alternative-dichotomy-improved} are  verified in this rescaled situation.  First, we note that,  according to \eqref{DALS:iskcd0} and \eqref{DALS:iskcd},
	$$ a(\eta^{k_0+1})\ge\frac{M}{2}.$$
    Also, since $k_0\in{\mathcal{K}}$,
	$$ a(\eta^{k_0+1})\le\eta^{-Q/2}a(\eta^{k_0})\le \eta^{-Q/2}\big(C(\eta)M+2^{-k_0}a(1)
	\big)\le \eta^{-Q/2}\big(C(\eta)M+a(1)\big).$$
	These  two conditions yields \eqref{A0a1} in this rescaled setting with
	\begin{equation}\label{speriamobene}
		a_0:=\frac{M}2\qquad {\mbox{ and }}\qquad a_1:=\eta^{-Q/2}\big(C(\eta)M+a(1)\big).\end{equation}
    Moreover,  from \eqref{DALS:iskcd} and \eqref{second-alternative-dichotomy-B_eta^k},  it holds
	$$\left(\fint_{B_{\eta^{k_0+1}}}\left| \Hnabla u(x)-\bq(x)\right|^2 \,dx\right)^{1/2}\le 2\e a(\eta^{k_0+1}),
	$$  so \eqref{second-alternative-dichotomy} is satisfied (here with $2\e$  in place of $\e$). Finally, we achieve from \eqref{DALS:iskcd} and \eqref{second-alternative-dichotomy-B_eta^k}
	$$			\frac{\eta^{Q/2}a(\eta^{k_0+1})}{4}\le\frac{a(\eta^{k_0})}{4}<\left|\bq\right|\le C_0a(\eta^{k_0})\le2C_0a(\eta^{k_0+1}),
	$$
	which gives that \eqref{bound-q} is  also fulfilled (even if with different structural constants).  To summarize, all the hypotheses of Corollary \ref{corollary-lemma-second-alternative-dichotomy-improved} in this rescaled context are satisfied.
	
	We can  thus exploit an adapted version of Corollary \ref{corollary-lemma-second-alternative-dichotomy-improved}.  This tells us that there exist $\e_0$ (possibly different from the one coming from Proposition \ref{dic}) and $\kappa_0$, depending on $Q$, $\beta$, $a_0$ and $a_1$,
	such that  if
	\begin{equation}\label{condaggforseokroetu}
		\widetilde\kappa\leq\kappa_0\e_0^2,\end{equation} 
	 then we have
	\begin{equation}\label{swqevu3576435432647328528765uhgfhdf}
		\left\|\Hnabla u\right\|_{L^{\infty} (B_{\eta^{k_0+1}/2})}\le \bar{C}a(\eta^{k_0}),
	\end{equation}
	for some  universal constant $\bar{C}>0$. We  stress that  \eqref{condaggforseokroetu} holds by choosing $s$ in \eqref{1-e2LS} small enough, i.e., taking $s:= \left(\frac{\kappa_0 \e_0^2}{2\kappa}\right)^{\frac1{\beta}}$.
	Notice, in particular, that
    \begin{equation}\label{noticeinpartlsworeteuyti8787686}
		\mbox{$s$ depends on $Q$, $\kappa$,
				$\beta$, and $\|\Hnabla u\|_{L^2(B_1)}$},
    \end{equation}  by virtue of \eqref{speriamobene}.

    Using \eqref{swqevu3576435432647328528765uhgfhdf}, we then get, for all $r\in\Big(0,\frac{\eta^{k_0+1}}2 \Big)$, recalling the definition of $k_0$,
	\begin{equation}\label{ineq-a(r)-r-le-eta^k_0/2}\begin{split}&
			a(r)=\left(\frac1{|B_r|}\int_{B_r}|\Hnabla u(x)|^2\,dx\right)^{1/2}\le \bar{C}a(\eta^{k_0})\le
			\bar{C}\big(C(\eta)M+2^{-k_0}a(1)\big)\\&\qquad\qquad\le\bar{C}\big(C(\eta)M+a(1)\big)\le C(M,\eta)(1+a(1)),
	\end{split}\end{equation}
	as long as $C(M,\eta)\ge\bar{C}(C(\eta)M+1)$.\\ Otherwise, if $r\in\left[\frac{\eta^{k_0+1}}2,\eta\right]$, we
take $k_r\in\N$ such that $\eta^{k_r+1}<r\le\eta^{k_r}$. Note that
	$$\frac1{\eta^{k_r}}\le\frac1r\le\frac{2}{\eta^{k_0+1}},$$
	whence
	\begin{equation}\label{L23tSx32}
		k_r \le k_0+C_\star,
	\end{equation}
	where $C_\star:=1+\frac{\log2}{\log(1/\eta)}$.\\    We then distinguish two cases depending on whether $k_r\in\{0,\dots,k_0\}$.  If this holds, then $k_r\in{\mathcal{K}}$ and therefore
	\begin{equation*}
		a(\eta^{k_r})\le C(\eta)M+2^{-k_r}a(1),
	\end{equation*}
	from which, recalling \eqref{ineq-a(eta^k)}, we obtain 
        \begin{equation}\label{kpd012}
		\begin{split}&
			a(r)\le\left(\frac1{|B_{\eta^{k_r+1}}|}\int_{B_{\eta^{k_r}}}|\Hnabla u(x)|^2\,dx\right)^{1/2}
			=\eta^{-Q/2}a(\eta^{k_r})\\&\qquad\qquad\le\eta^{-Q/2}\big(C(\eta)M+2^{-k_r}a(1)\big)\le C(M,\eta)(1+a(1)).\end{split}
	\end{equation}
	If instead $k_r>k_0$, by \eqref{L23tSx32}, we have
	\begin{eqnarray*}&&
		a(r)\le\left(\frac1{|B_{\eta^{k_0+C_\star+1}}|}\int_{B_{\eta^{k_0}}}|\Hnabla u(x)|^2\,dx\right)^{1/2}
		=\eta^{-Q(C_\star+1)/2} a(\eta^{k_0})\\ &&\qquad\qquad\quad\le
		\eta^{-Q(C_\star+1)/2}\big(C(\eta)M+2^{-k_0}a(1)\big)\le\eta^{-Q(C_\star+1)/2}\big(C(\eta)M+a(1)\big)\\&&\quad\qquad\qquad\le
		C(M,\eta)(1+a(1)),
	\end{eqnarray*}
	as long as $C(M,\eta)$ is chosen  sufficiently large. Combining this with \eqref{kpd012} and \eqref{ineq-a(r)-r-le-eta^k_0/2},
	we reach \eqref{ineq-a(r)-r<1}.\medskip
	
	\noindent{\em Step 2: conclusions.}
	 Up to scaling and translations,  we can achieve that \eqref{ineq-a(r)-r<1} is true in all balls with  center $x_0$ in $B_{1/2}$ and sufficiently small radius.  In other words, for all $r\in(0,\eta]$, it holds
	\begin{equation*}
		a(r,x_0)\le C(M,\eta)(1+a(1)),
	\end{equation*}
	where
	$$ 				a(r,x_0):= \left(\fint_{B_{r}(x_0)}\left| \Hnabla u(x)\right|^2\,dx\right)^{1/2}.$$
	 Consequently, in view of the Lebesgue Differentiation Theorem (see, e.g., \cite{T04}), recalling that $u\in  HW^{1,2}(B_1)$, we  obtain, for almost every $x_0\in B_{1/2}$,
	\begin{align*}
		&\left|\Hnabla u(x_0)\right|=\lim_{r\rightarrow 0}a(r,x_0)\le C(M,\eta)(1+a(1))=C\Big(1+\left\|\Hnabla u\right\|_{L^2(B_1)}\Big),
	\end{align*}
	for some $C>0$, and thus
	\begin{equation*} 
		\left\|\Hnabla u\right\|_{L^{\infty}(B_{1/2})}\le C\Big(1+\left\| \Hnabla u\right\|_{L^2(B_1)}\Big).
	\end{equation*}
	
	We now show that the second claim in Theorem \ref{theor-Lipsch-contin-alm-minim} is also true.  To this end, we can suppose that
	\begin{equation}\label{PROCSH}\{u=0\}\cap B_{s/100}\ne\varnothing,\end{equation}
	and we consider a Lebesgue point $\bar{x}\in B_{s/100}$ for $\Hnabla u$.
	Up to a left-translation, we can take that $\bar{x}=0$ and  modify \eqref{PROCSH} into
	\begin{equation}\label{PROCSH2}\{u=0\}\cap B_{s/50}\ne\varnothing.\end{equation}
	We claim that, in this case,
	\begin{equation}\label{alt-c-PK}
		{\mathcal{K}}=\N.
	\end{equation}
	Indeed, suppose this does not hold and let $k_0$ as before (recall \eqref{prop-k0+1}). Hence, according to \eqref{second-alternative-dichotomy-B_eta^k}, we can apply Corollary \ref{corollary-lemma-second-alternative-dichotomy-improved} (rescaled as above) and get,  from \eqref{rmk-coroll-assurdo}, that $u>0$ in $B_{s/2}$, in contradiction  with \eqref{PROCSH2}. This establishes \eqref{alt-c-PK}.\\
	Therefore, in light of \eqref{ineq-a(eta^k)}  and \eqref{alt-c-PK}, for all $k\in\N$,
	$$	a(\eta^k)\le C(\eta)M+2^{-k}a(1),$$
	 which yields
	$$ |\Hnabla u(\bar{x})|=|\Hnabla u(0)|=\lim_{k\to+\infty}a(\eta^k)\le
	\lim_{k\to+\infty}\big(C(\eta)M+2^{-k}a(1)\big)=C(\eta)M.$$
	 The proof of the second claim in Theorem \ref{theor-Lipsch-contin-alm-minim} is then complete by also recalling  \eqref{noticeinpartlsworeteuyti8787686}.
\end{proof}
\noindent F.F. is partially supported by 2023-INDAM-GNAMPA-project {\it Equazioni completamente non lineari locali e non locali.}\\  F.F. is supported by the PRIN research project 2022 7HX33Z - CUP J53D23003610006, "Pattern formation in nonlinear phenomena"\\
N.F. is partially supported by 2023-INDAM-GNAMPA-project {\it Problemi variazionali/nonvariazionali; interazione tra metodi integrali.}\\
E.M. is partially supported by 2023-INDAM-GNAMPA-project {\it Equazioni nonlocali di tipo misto e geometrico.}\\ 
\begin{center}{\bf Data availability statement}
\end{center}Data availability statement Data sharing not applicable to this article as no datasets were generated or analysed
during the current study.
\bibliographystyle{alpha}
 	
\end{document}